\documentclass[11pt,a4paper]{article}

\setlength{\parskip}{2mm}
\setlength{\parindent}{4mm}
\setlength{\headsep}{7mm}
\setlength{\headheight}{5mm}
\setlength{\textwidth}{155mm}
\setlength{\evensidemargin}{0mm}
\setlength{\oddsidemargin}{5mm}
\setlength{\textheight}{245mm}
\setlength{\topmargin}{-10mm}
\setlength{\arraycolsep}{1mm}

\usepackage[math]{cellspace}
\cellspacetoplimit 2pt
\cellspacebottomlimit 4pt

\usepackage{empheq}

\usepackage{psfrag}
\usepackage{amsmath,amssymb,cite}
\usepackage{latexsym}
\usepackage{graphicx}
\usepackage{lscape}
\usepackage{afterpage}
\setlength{\marginparwidth}{2cm}
\usepackage[color=green!40]{todonotes}
\definecolor{OliveGreen}{rgb}{0,0.6,0}
% Hyper-referencing of eqns, refs., url, etc.
\usepackage{hyperref}
\hypersetup{
	colorlinks=true,
	linkcolor=blue,
	filecolor=magenta,      
	urlcolor=cyan,
	citecolor=red,
}
\urlstyle{same}
\usepackage{cancel}

\DeclareMathOperator*{\argmin}{argmin}

\newcommand{\Id}{\ensuremath{\operatorname{Id}}}

\newcommand{\ds}{\displaystyle}
\newcommand{\nexto}{\kern -0.54em}
\newcommand{\dR}{{\rm {I\ \nexto R}}}

\newcommand{\dZ}{{\cal Z \kern -0.7em Z}}
\newcommand{\dC}{{\rm\hbox{C \kern-0.8em\raise0.2ex\hbox{\vrule
				height5.4pt width0.7pt}}}}
\newcommand{\dQ}{{\rm\hbox{Q \kern-0.85em\raise0.25ex\hbox{\vrule
				height5.4pt width0.7pt}}}}
\newcommand{\proofbox}{\hspace{\fill}{$\Box$}}
\newtheorem{lemma}{Lemma}
\newtheorem{theorem}{Theorem}
\newtheorem{corollary}{Corollary}
\newtheorem{proposition}{Proposition}

\newtheorem{fact}{Fact}
\newtheorem{remark}{Remark}

\newtheorem{algorithm}{Algorithm}
\newenvironment{proof}{Proof.}{\proofbox \\}

%=================New commands by Walaa begins =========
%\colorlet{hlcyan}{cyan!30}
%\DeclareRobustCommand{\hl}[1]{{\sethlcolor{hlcyan}\hl{#1}}}
\providecommand{\norm}[1]{\lVert#1\rVert}
\newcommand{\dom}{\ensuremath{\operatorname{dom}}}

\newcommand{\prox}{\ensuremath{\operatorname{Prox}}}
\providecommand{\intr}{\ensuremath{\operatorname{int}}}
\providecommand{\fix}{\ensuremath{\operatorname{Fix}}}
\newcommand{\menge}[2]{\big\{{#1}~\big |~{#2}\big\}}
\newcommand{\scal}[2]{\left\langle{#1},{#2}  \right\rangle}
\newcommand{\sperp}{{\scriptscriptstyle\perp}}%for SMALLER math symbol

\newcommand{\weakly}{\ensuremath{\:{\rightharpoonup}\:}}

\definecolor{myblue}{rgb}{0.9,0.9,0.98}

\usepackage[normalem]{ulem}
\newcommand{\stkout}[1]{\ifmmode\text{\sout{\ensuremath{#1}}}\else\sout{#1}\fi}
\definecolor{mypink}{rgb}{0.87, 0.19, 0.39}
%====================New commands by Walaa ends 

\begin{document}
	
	\author{Authors}
	
	\author{
		Regina S. Burachik\thanks{Mathematics, UniSA STEM, University of South Australia, Mawson Lakes, S.A. 5095, Australia. E-mail: regina.burachik@unisa.edu.au, \,bethany.caldwell@mymail.unisa.edu.au, and yalcin.kaya@unisa.edu.au\,.}
		\and
Bethany I. Caldwell\footnotemark[1]
  \and
		C. Yal{\c c}{\i}n Kaya\footnotemark[1]
		\and
		Walaa M. Moursi\thanks{Department of Combinatorics and Optimization, University of Waterloo,
		Waterloo, Ontario N2L~3G1, Canada. E-mail: walaa.moursi@uwaterloo.ca\,.}
	}
	
	\title{\bf Optimal Control Duality and the Douglas--Rachford Algorithm}
	
	\maketitle
	
	\vspace*{-7mm}
	
	\begin{abstract} 
		{\noindent\sf  
		We explore the relationship between the dual of a weighted minimum-energy control problem, a special case of linear-quadratic optimal control problems, and the Douglas--Rachford (DR) algorithm.  We obtain an expression for the fixed point of the DR operator as applied to solving the optimal control problem, which in turn devises a certificate of optimality that can be employed for numerical verification. The fixed point and the optimality check are illustrated in two example optimal control problems. }\end{abstract}
	\begin{verse}		{\em Keywords}\/: {\sf Certification of optimality, Duality, 
   Douglas--Rachford algorithm,  Numerical methods, Optimal control, Double integrator, Machine tool manipulator}.
	\end{verse}
	\begin{verse}
        {\bf Mathematical Subject Classification: 49N10; 49N15; 49M29}
	\end{verse}

	%\centerline{\bf Submitted to }
	\pagestyle{myheadings}
	%\markboth{}{Submitted to {\sf }, }
	\markboth{}{\sf\scriptsize Optimal Control Duality and DR Algorithm by R. S. Burachik, B. I. Caldwell, C. Y. Kaya \& W. M. Moursi}

	\section{Introduction}
	
Linear--quadratic (LQ) optimal control problems constitute an important class encountered in many theoretical studies and areas of applications---see for example \cite{AltKaySch2016, BurKayMaj2014, ChrMauZir2010, AmmKen1998, MauObe2003, Mou2011, BusMau2000, KugPes1990}.  These problems are typically concerned with the minimization of a quadratic functional subject to linear differential equations and further affine constraints.  In this paper, we consider control-constrained {\em weighted minimum-energy control problems}\footnote{From a physics viewpoint, it is not neccessary to minimize the ``true'' energy of a dynamical system here. The main concern is rather to minimize the ``energy of the control or signal'' or the ``energy of the force.''  Elaborations of this subtle difference in terminology can also be found in \cite[Section~6.17]{AthFal1966}, \cite[Section~5.5]{Kirk1970}, \cite[Section~2.9]{Klamka2019} and \cite[page~118]{Sethi2019}.}, which are a special class of LQ optimal control problems.  

The Douglas--Rachford (DR) algorithm is an operator splitting method which has recently been applied to solving this special class of optimal control problems~\cite{BauBurKay2019,BurCalKay2022}.  In this paper, we explore the relationship between the dual of the optimal control problem and the DR algorithm.  In particular, we find an expression for the fixed point of the DR operator as applied to solving the optimal control problem (see Theorem \ref{thm:fixedpt}), which devises a certificate of optimality for a numerical solution.

A traditional approach to solving an LQ optimal control problem is to discretize the problem via some Runge--Kutta scheme and then apply a finite-dimensional large-scale optimization software, for example the AMPL--Ipopt suite~\cite{BurCalKay2022, AMPL, WacBie2006}.  The studies in~\cite{BauBurKay2019,BurCalKay2022} have shown that the application of the DR algorithm to the original infinite-dimensional problem (even for relatively simple instances) outperforms the traditional direct discretization approach.  Previously the DR algorithm has also been applied to solving discrete-time optimal control problems \cite{OdoStaBoy2013}; however, our main focus here will be the continuous-time (i.e., infinite dimensional) optimal control problem.

Duality theory for optimal control problems has been studied since the 1970\text{s} by Rockafellar~\cite{Rockafellar1970a, Rockafellar1971, Rockafellar1987}.  In particular,~\cite{Rockafellar1987} deals with general LQ control problems with state and control constraints.  Later \cite{HagIan1984} and \cite{BulKro2008}   used the classical Lagrangian function to derive the dual problem for optimal control problems.  Relatively recently the Fenchel dual of general LQ control problems has been derived in~\cite{BurKayMaj2014} in view of directly discretizing the dual problem and then applying the AMPL--Ipopt suite.  Most of our theoretical framework is similar to the duality approach in~\cite{BurKayMaj2014}, except that our formulation of the primal and dual problems is slightly modified so as to have primal and dual variables belonging to the same Hilbert space. 
%for the dual of the weighted minimum-energy control problem in this paper.

To apply the DR splitting algorithm we write the primal problem as the problem of minimizing the sum of two convex functions.
The DR algorithm is employed to solve the monotone inclusion of finding  a 
zero of the sum of the {\em subdifferential} operators of these functions.
Of particular interest from a duality perspective is the fact that 
%when \emph{total duality} holds 
%(see Remark~\ref{} below)
%(see \cite{BGW-2008} and \cite{BGW2-2008})
the DR splitting operator is self-dual, i.e.,
the splitting operator for the primal problem is the same as that for the dual problem (see \cite[Lemma~3.6 on page~133]{EckThesis}).

In the present paper, we consider the application of the DR algorithm to the dual of the control-constrained weighted minimum-energy control problem.  We derive an expression for the fixed point of the DR operator specific to optimal control (see Theorem \ref{thm:fixedpt}).  Then we use this expression in the verification of the optimality condition on the numerical solutions of two problems: one involving the double integrator, which is a simple but rich enough instance, and the machine tool manipulator, which is a more challenging instance. To the authors' best knowledge this interplay between the DR algorithm and duality of (infinite dimensional) optimal control problems has not been previously explored. 

% \textcolor{blue}{In this context, where only numerical solutions are available, it crucial to have an certificate of optimality. Our aim is to show that DR can be a tool that provides such a certificate. 
% }

The paper is organized as follows. In Section~2 we provide the preliminaries, where we introduce the mathematical model of the optimal control problem, split the constraints into an affine set and a box, and prove results about the projection onto the affine set.  We also present in this section the optimality conditions for the control problem.  In Section~3, we introduce the dual of the optimal control problem and transform it into a new form suitable for our remaining analysis.  We derive the proximity operators and deduce that the new form is the Fenchel dual of the primal problem.  In Section~4, we introduce the DR operator, derive its fixed point, and provide the algorithm we propose to use for the optimal control problem. The latter algorithm generates both primal and dual sequences.  In Section~5 we perform computations to illustrate the algorithm and the convergence of the primal and dual iterates, via problems involving the double integrator and a machine tool manipulator.  Furthermore we verify the optimality conditions using the certificate we devised in Section~4, for the same problems.  Finally, in Section~6 we provide some concluding remarks.

\section{Preliminaries}

In the weighted minimum-energy control problem, the aim is to find a control $u$ which {\em minimizes} the {\em quadratic objective functional}
 	\begin{equation}  \label{quad_obj_fun}
		\ds \frac{1}{2}\int_{0}^{1} r(t)\,u^2(t)\,dt\,,
	\end{equation}
subject to the linear differential equation constraints
 	\begin{equation}  \label{ODE}
		\dot{x}(t) = A(t)\,x(t) + b^T(t)\,u(t)\,,\ \ \mbox{for a.e.\ } t\in[0,1]\,,
	\end{equation}
	with $\dot{x} := dx/dt$, and the boundary conditions
	\begin{equation}  \label{BC}
		\psi(x(0),x(1)) = 0\,.
	\end{equation}
	We define the {\em state variable vector} $x:[0,1]\to\dR^n$ $x(t) := (x_1(t),\ldots,x_n(t))\in\dR^n$
 and the {\em control variable} 
$u:[0,1]\to\dR$ with $u(t)\in\dR$. 
The time-varying matrices $A:[0,1]\to \dR^{n\times n}$ and 
$b:[0,1]\to\dR^{n\times 1}$
are continuous, and $b$ is not the zero vector. We also assume that 
$r:[0,1]\to \dR_{++}$ is continuous.
The vector function $\psi:\dR^{2n}\to\dR^s$, with $\psi(x(0),x(1)) := (\psi_1(x(0),x(1)),\ldots,\psi_s(x(0),x(1)))\in\dR^s$, is affine. Without loss of generality the time horizon in~\eqref{quad_obj_fun}--\eqref{BC} is set to be $[0,1]$ unless stated otherwise.
 	
	Although a vast majority of the studies on LQ control in the optimal control literature deal with the above problem with no constraints imposed on the control variable $u$, it is much more realistic, especially in practical situations, to consider restrictions on the values that $u$ is allowed to take.  In many applications, it is common practice to impose simple bounds on the components of $u(t)$; namely,
 \begin{equation}  \label{bounds}
		\underline{a}(t) \le u(t) \le \overline{a}(t)\,,\ \ \mbox{for a.e.\ } t\in[0,1]\,,
	\end{equation}
	where, respectively, the lower and upper bound functions 
  $\underline{a},\overline{a}:[0,1]\to\dR$ are continuous and $\underline{a}(t) < \overline{a}(t)$, for all $t\in[0,1]$.
In other words, we {\em formally} state
	\[
		u(t)\in U(t) := [\underline{a}(t),\overline{a}(t)]\subset \dR,\ \ \mbox{for a.e.\ } t\in[0,1]\,,
	\]
	as an expression alternative but equivalent to~\eqref{bounds}.
 
	The objective functional in~\eqref{quad_obj_fun} and the constraints in~\eqref{ODE}--\eqref{bounds} can be put together to present the control-constrained weighted minimum-energy control problem as follows.	\[
	\mbox{(P) }\left\{\begin{array}{rl}
		\ds\min_{u(\cdot)} & \ \ \ds\frac{1}{2}\int_{0}^{1} r(t)u^2(t)\,dt \\[5mm] 
		\mbox{subject to} & \ \ \dot{x}(t) = A(t)x(t) + b^T(t)u(t)\,,\ \ \mbox{for a.e.\ } t\in[0,1]\,, \\[2mm]
		& \ \ \psi(x(0),x(1)) = 0\,, \\[2mm]
		& \ \ \underline{a}(t) \le u(t) \le \overline{a}(t)\,,\ \ \mbox{for a.e.\ } t\in[0,1]\,.
	\end{array} \right.
	\]
We pose the primal variable in (P) as $u$, since every given $u$ generates a unique $x$ via the ODE system. 
 %\textcolor{blue}{Additionally, we say that the pair $(x,u)$ \emph{ is optimal for (P)} when $(x,u)$ verifies the constraints of (P), and $u$ is optimal. } 

In Problem~(P), the control variable $u$ can in general be a vector, namely $u:[0,1]\to\dR^m$ with $m$ components, $u(t) = (u_1(t),\ldots,u_m(t))\in\dR^m$, with lower and upper bounds imposed on each of the $m$ control variables. For clarity and neatness of the expressions, we only consider a single (or scalar) control variable $u$ (for $m = 1$).  Otherwise, the results in this paper easily extend to the case of multiple control variables, thanks to the separability of the projections.

\subsection{Constraint splitting}

	We split the constraints of Problem~(P) into two sets:
\begin{eqnarray} 
		&& {\cal A} := \big\{u\in 
        L^{2}([0,1];\dR)
  \ |\ \exists x\in W^{1,2}([0,1];\dR^n)\mbox{ which solves } \nonumber \\[1mm]
		&&\hspace*{45mm} \dot{x}(t) = A(t)x(t) +  b^T(t)u(t)
  \,,\ \ \mbox{for a.e.\ } t\in[0,1]\,, \mbox{ and }\nonumber \\[1mm]
		&&\hspace*{45mm} \psi(x(0),x(1)) = 0 \big\}\,, \nonumber \\[2mm]
		&& {\cal B} := \big\{u\in L^{2}([0,1];\dR)\ |\ \underline{a}(t) \le u(t) \le \overline{a}(t)\,,\ \mbox{for a.e.\ } t\in[0,1]\big\}\,, \nonumber
	\end{eqnarray}
where $W^{1,2}([0,1];\dR^n)$ is the Sobolev space of absolutely continuous functions, namely,
\[
{W}^{1,2}([0,1];\dR^n)=\left\{ z\in {L}^2([0,1];\dR^n) \,\,|\,\,
\dot z = dz/dt \in {L}^2([0,1];\dR^n)\,\right\}.
\]
% endowed with the norm
% \[
% \|z\|_{{\cal L}^{1,2}}=\left(\|z\|_2^2+\|\dot z\|_2^2\right)^{1/2}.
% \]
We assume that the control system  
$\dot{x}(t) = A(t)x(t) + b^T(t)u(t)$
is {\em controllable}~\cite{Rugh1996}. The latter means that there exists a (possibly not unique) $u(\cdot)$ such that, when this $u(\cdot)$ is substituted, the boundary-value problem given in ${\cal A}$ has a solution $x(\cdot)$.  In other words, controllability is equivalent to ${\cal A} \neq \emptyset$.  Also, clearly, ${\cal B} \neq \emptyset$.  We observe that the constraint set~${\cal A}$ is an {\em affine subspace} and ${\cal B}$ a {\em box}, constituting two convex sets in a Hilbert space.  In particular, we note that ${\cal B}$ 
is closed in $L^2([0,1];\dR)$. 
%\textcolor{magenta}{RSB: we need to specify the topology %used here, are we assuming that $u$ is continuous?}. 

		In the following we set
 \begin{equation}
a^\sperp=P_{\cal A} 0,\label{eq:a}
\end{equation}
where $P_{\cal C}$ is the {\em orthogonal projection} onto the nonempty closed and convex set $\cal C$.
	
We now prove the following useful lemma
which we shall use in the sequel.
\begin{lemma}
\label{lem:aperp}
The following hold:
\begin{enumerate}
\item
\label{lem:aperp:i}
${\cal A}=a^\sperp+(\cal A-\cal A)$.
\item	
\label{lem:aperp:ii}
$a^\sperp\in (\cal A-\cal A)^\perp$.
\item
\label{lem:aperp:iii}
$P_{\cal A}=a^\sperp+P_{\cal A-\cal A}$.

\end{enumerate}		
\end{lemma}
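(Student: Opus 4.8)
The plan is to exploit the fact that $\mathcal{A}$ is a nonempty affine subspace, so it is a translate of the linear subspace $\mathcal{A}-\mathcal{A}$, and that $a^\sperp=P_{\mathcal{A}}0$ is precisely the minimum-norm element of $\mathcal{A}$. For part \ref{lem:aperp:i}, I would fix any $u_0\in\mathcal{A}$ (possible since controllability gives $\mathcal{A}\neq\emptyset$) and observe that $\mathcal{A}=u_0+(\mathcal{A}-\mathcal{A})$; since $a^\sperp\in\mathcal{A}$ as well, replacing $u_0$ by $a^\sperp$ gives $\mathcal{A}=a^\sperp+(\mathcal{A}-\mathcal{A})$. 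This is essentially a one-line consequence of the definition of an affine subspace, so no real obstacle here; the only thing to state clearly is that $\mathcal{A}-\mathcal{A}$ is a genuine linear subspace (closed under addition and scalar multiplication) because $\mathcal{A}$ is affine.

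For part \ref{lem:aperp:ii}, I would use the variational characterization of the projection onto the closed convex (here, affine) set $\mathcal{A}$: the point $a^\sperp=P_{\mathcal{A}}0$ is characterized by $a^\sperp\in\mathcal{A}$ and $\scal{0-a^\sperp}{v-a^\sperp}\le 0$ for all $v\in\mathcal{A}$. By part \ref{lem:aperp:i}, every $v\in\mathcal{A}$ has the form $v=a^\sperp+w$ with $w\in\mathcal{A}-\mathcal{A}$, so the inequality becomes $\scal{-a^\sperp}{w}\le 0$ for all $w\in\mathcal{A}-\mathcal{A}$. Since $\mathcal{A}-\mathcal{A}$ is a linear subspace, $w$ ranges over it together with $-w$, which upgrades the inequality to the equality $\scal{a^\sperp}{w}=0$ for all $w\in\mathcal{A}-\mathcal{A}$, i.e. $a^\sperp\in(\mathcal{A}-\mathcal{A})^\perp$. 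I would also need the closedness of $\mathcal{A}$ (equivalently of $\mathcal{A}-\mathcal{A}$) in $L^2$ for the projection to be well defined; this follows from controllability together with the continuity of the linear solution map $u\mapsto x$ from $L^2$ into $W^{1,2}$, and I would either cite this or note it briefly.

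For part \ref{lem:aperp:iii}, I would compute $P_{\mathcal{A}}u$ for arbitrary $u\in L^2([0,1];\dR)$ directly from the characterization. Write the candidate $z:=a^\sperp+P_{\mathcal{A}-\mathcal{A}}(u-a^\sperp)$. By part \ref{lem:aperp:i}, $z\in\mathcal{A}$. To verify $z=P_{\mathcal{A}}u$ it suffices to check $\scal{u-z}{v-z}\le 0$ (in fact $=0$) for all $v\in\mathcal{A}$; writing $v=a^\sperp+w$ with $w\in\mathcal{A}-\mathcal{A}$, we get $v-z=w-P_{\mathcal{A}-\mathcal{A}}(u-a^\sperp)\in\mathcal{A}-\mathcal{A}$, and $u-z=(u-a^\sperp)-P_{\mathcal{A}-\mathcal{A}}(u-a^\sperp)$, which lies in $(\mathcal{A}-\mathcal{A})^\perp$ by the projection theorem for the subspace $\mathcal{A}-\mathcal{A}$. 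Hence the inner product vanishes, giving $P_{\mathcal{A}}u=z$, which is exactly the claimed identity $P_{\mathcal{A}}=a^\sperp+P_{\mathcal{A}-\mathcal{A}}(\cdot-a^\sperp)$; combining with part \ref{lem:aperp:ii} (which lets one absorb/simplify the shift, since $P_{\mathcal{A}-\mathcal{A}}$ is linear and $a^\sperp\perp(\mathcal{A}-\mathcal{A})$ forces $P_{\mathcal{A}-\mathcal{A}}(u-a^\sperp)=P_{\mathcal{A}-\mathcal{A}}u$) yields the stated form $P_{\mathcal{A}}=a^\sperp+P_{\mathcal{A}-\mathcal{A}}$.

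The only genuine subtlety — the ``hard part,'' such as it is — is making sure the orthogonal projection onto $\mathcal{A}$ is well defined, i.e. that $\mathcal{A}$ is closed in $L^2([0,1];\dR)$; everything else is a routine application of the projection theorem for affine subspaces and linear subspaces. I would handle closedness by invoking continuity of the control-to-state map and the fact that the boundary condition $\psi(x(0),x(1))=0$ is a closed affine constraint, then proceed with the three computations above in the stated order.
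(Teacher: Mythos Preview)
Your proposal is correct and follows essentially the same approach as the paper: part~\ref{lem:aperp:i} is identical, and parts~\ref{lem:aperp:ii}--\ref{lem:aperp:iii} both hinge on the affine-translate decomposition together with linearity of $P_{\mathcal{A}-\mathcal{A}}$ and the fact that $a^\sperp$ is killed by $P_{\mathcal{A}-\mathcal{A}}$. The only cosmetic difference is that the paper packages the computations as operator identities citing the translation formula $P_{a^\sperp+(\mathcal{A}-\mathcal{A})}=a^\sperp+P_{\mathcal{A}-\mathcal{A}}(\cdot-a^\sperp)$ and the decomposition $\Id-P_{\mathcal{A}-\mathcal{A}}=P_{(\mathcal{A}-\mathcal{A})^\perp}$ from \cite{BC2017}, whereas you rederive these from the variational characterization of the projection; your attention to the closedness of $\mathcal{A}$ is a point the paper leaves implicit.
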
	
\begin{proof}
\eqref{lem:aperp:i}:
Because $\cal A$ is an affine subspace, 
we can simply write it as ${\cal A}=s+(\cal A-\cal A)$ for any $s\in \cal A$ and observe 
that $\cal A-\cal A$ is a linear subspace. 
In particular, we can set $s:=a^\sperp\in \cal A$.	

\eqref{lem:aperp:ii}:
To see this we note that  
$a^\sperp=P_{\cal A}(0)= P_{a^\sperp+(\cal A-\cal A)}(0)
=a^\sperp\,\,+P_{\cal A-\cal A}(0-a^\sperp)=a^\sperp-P_{\cal A-\cal A}(a^\sperp)
=(\Id-P_{\cal A-\cal A})(a^\sperp)=P_{(\cal A-\cal A)^\perp}(a^\sperp)$, where we use part \ref{lem:aperp:i}, the translation formula \cite[Proposition~3.19]{BC2017}, the linearity of $P_{\cal A-\cal A}$, and the fact that $\Id-P_{\cal A-\cal A}=P_{(\cal A-\cal A)^\perp}$
(see \cite[Corollary~3.24(iii)~and~(v)]{BC2017}).

\eqref{lem:aperp:iii}:
Indeed, we have
$P_{\cal A}= P_{a^\sperp+(\cal A-\cal A)}
=a^\sperp\,\,+P_{\cal A-\cal A}(\cdot-a^\sperp)$
$=a^\sperp+P_{\cal A-\cal A}(\cdot)-P_{\cal A-\cal A}(a^\sperp)
=(\Id-P_{\cal A-\cal A})(a^\sperp)+P_{\cal A-\cal A}(\cdot) =P_{(\cal A-\cal A)^\perp}(a^\sperp)+P_{\cal A-\cal A}(\cdot)=a^\sperp +P_{\cal A-\cal A}(\cdot) $, 
where, besides from part \ref{lem:aperp:ii}, we used again the translation formula \cite[Proposition~3.19]{BC2017},
the linearity of $P_{\cal A-\cal A}$, and the fact that $\Id-P_{\cal A-\cal A}=P_{(\cal A-\cal A)^\perp}$. 	
\end{proof}

	\subsection{Optimality conditions}
	\label{sec:optimality}
	
	In what follows we will derive the necessary conditions of optimality for Problem~(P), using the {\em maximum principle}.  Various forms of the maximum principle and their proofs can be found in a number of reference books---see, for example, \cite[Theorem~1]{PonBolGamMis1986}, \cite[Chapter~7]{Hestenes1966}, \cite[Theorem~6.4.1]{Vinter2000}, \cite[Theorem~6.37]{Mordukhovich2006}, and \cite[Theorem~22.2]{Clarke2013}.  We will state the maximum principle suitably utilizing these references for our setting and notation.  
	
	First, define the {\em Hamiltonian function} 
$H:\dR^n \times \dR \times \dR^n \times [0,1]\to \dR$
 for Problem~(P) as
 	\[
		H(x,u,\lambda,t) := \frac{1}{2} r(t)\,u^2 + \left\langle\lambda, A(t)\,x + b^T(t)\,u \right\rangle\,,
	\]
	where $\lambda(t) := (\lambda_1(t),\ldots,\lambda_n(t))\in\dR^n$ is the {\em adjoint variable} (or {\em costate}) {\em vector} such that
	\[ 
	\dot{\lambda}(t) := -\frac{\partial H}{\partial x}(x(t), u(t), \lambda(t), t)\,,
	\] 
	i.e.,
	\begin{equation} \label{eq:adjoint} 
		\dot{\lambda}(t) = -A^T(t)\,\lambda(t)\,,
	\end{equation} 
	where the {\em transversality conditions} involving $\lambda(0)$ and $\lambda(1)$ depend on the boundary condition $\psi(x(0),x(1)) = 0$, but are not needed for our purposes and therefore omitted.
	
	\noindent 
	{\bf Maximum Principle.}\ \  Suppose that $u\in L^2([0,1];\dR)$
 is optimal for Problem~(P).  Then there exists a continuous adjoint variable vector $\lambda\in W^{1,2}([0,1];\dR^n)$ as defined in~\eqref{eq:adjoint}, such that $\lambda(t)\neq{\bf 0}$ for any $t\in[0,1]$, and that, for a.e.\ $t\in[0,1]$,
 \begin{equation}  \label{cond:opt_u}
		u(t) = \argmin_{w\in[\underline{a}(t), \overline{a}(t)]} H(x(t), w, \lambda(t), t) = \argmin_{w\in[\underline{a}(t), \overline{a}(t)]} \frac{r(t)}{2}\,w^2 + b^T(t)\,\lambda(t)\, w\,.
	\end{equation}
Condition~\eqref{cond:opt_u} in turn yields the optimal control as
 	\begin{equation}  \label{opt_u(t)}
		u(t) = \left\{\begin{array}{ll}
			\overline{a}(t)\,, &\ \ \mbox{if\ \ } -b^T(t)\,\lambda(t) > r(t)\,\overline{a}(t)\,, \\[1mm]
			-b^T(t)\,\lambda(t)/r(t)\,, &\ \ \mbox{if\ \ } r(t)\,\underline{a}(t) \le -b^T(t)\,\lambda(t) \le r(t)\,\overline{a}(t)\,, \\[1mm]
			\underline{a}(t)\,, &\ \ \mbox{if\ \ } -b^T(t)\,\lambda(t) < r(t)\,\underline{a}(t)\,,
		\end{array} \right.
	\end{equation}
	for all\ $t\in[0,1]$.  
	% \textcolor{purple}{RSB Mar 2: I suggest we delete the paragraph below.}
	% We note that the maximum principle originally involves the maximization of an objective functional and asserts Condition~\eqref{cond:opt_u} as the maximization of the Hamiltonian, or the Pontryagin, function.  Since we have written down Problem~(P) as a minimization problem, Condition~\eqref{cond:opt_u} is also expressed as the minimization of the Hamiltonian.  Owing to minimizing, rather than maximizing, the Hamiltonian, the maximum principle is sometimes referred to as the {\em minimum principle} by some authors, see e.g. \cite{OsmMau2012}.

	\section{Reformulation of the Dual Problem}
	\label{DR_duality}	

  In what follows, we 
  % \sout{will carry out calculations using only a single control variable, for simplicity in appearance.  Otherwise, all the conclusions can easily be drawn for the multiple control variable case.  So, for $i=1$, we set $b := b_i$, $\overline{a} := \overline{a}_i$, $\underline{a} := \underline{a}_i$, $r:=r_i$.  We also} 
  suppress/omit the dependence on $t$ of the specified data in the problem whenever it is convenient for clarity.  For example we write $r(t)$ as $r$, $A(t)$ as $A$, and so on.

 \subsection{Dual Problem}
	\label{sec:duality}
	
	The dual of a control-constrained LQ control problem was first given in~\cite{BurKayMaj2014} for a single control variable.  Then a generalization to multiple control variables was carried out in a straightforward manner in~\cite{AltKaySch2016}.  For simplicity of exposition, suppose that the boundary condition vector $\psi(x(0),x(1)) = 0$ is given as
	\begin{equation}
     \label{eq:BC}
     	x(0) = x_0\quad\mbox{and}\quad x(1) = x_f\,,
 \end{equation}
where $x_0,x_f\in\dR^n$.  All theoretical results below can be easily extended to the case of a general affine function $\psi$; for example as in~\cite{AltKaySch2016}.  Now, using~\cite{BurKayMaj2014}, the dual of Problem~(P) can subsequently be presented as follows.
	\[
	\mbox{(D1) }\left\{\begin{array}{rl}
		\ds\min_{p(\cdot)} & \ \ \ds\int_0^1 \theta(p(t),t)\, dt - \left(x_f^T\,p(1) - x_0^T\,p(0)\right)\\[5mm]
		\mbox{subject to} & \ \ \dot{p}(t) = -A^T\,p(t)\,,\ \ \mbox{for all\ } t\in[0,1]\,,
	\end{array} \right.
	\]
	where
	\begin{equation} \label{vartheta_p}
		\theta(p,t) := \left\{\begin{array}{ll}
			\overline{a}\,b^T\,p - r\,\overline{a}^2 / 2\,, 
			& \mbox{ if\ \ } b^T\,p >  r\,\overline{a}\,, \\[2mm]
			(b^T\,p)^2 / (2\,r)\,,
			& \mbox{ if\ \ } r\,\underline{a} \le b^T\,p
			\le  r\,\overline{a}\,, \\[2mm]
			\underline{a}\,b^T\,p - r\,\underline{a}^2 / 2\,, 
			& \mbox{ if\ \ } b^T\,p <  r\,\underline{a}\,,
		\end{array}\right.
	\end{equation}
	for all $t\in[0,1]$.  In the case of multiple controls, $\theta(p(t),t)$ in the dual objective functional is replaced by $\sum_{i=1}^m \theta_i(p(t),t)$, $\theta(p,t)$ in \eqref{vartheta_p} by $\theta_i(p,t)$,
	and $(\overline{a},\underline{a}, b, r)$
	by $(\overline{a}_i,\underline{a}_i, b_i, r_i)$,
	$i = 1,\ldots,m$. 
	
	We note that $p\in W^{1,2}([0,1];\dR^n)$ is the optimization variable of Problem~(D1).  We recall that the saddle-point property and the strong duality results given in~\cite[Theorem~2]{BurKayMaj2014}, as well as the hypothesis in the same theorem, imply that $p = -\lambda$,  where $\lambda(\cdot)$ is the adjoint variable of Problem~(P) satisfying~\eqref{eq:adjoint}.
% \textcolor{brown}{\underline{RSB 20 Jan 2023}: quote as a known fact the theo from \cite{BurKayMaj2014} showing that $p=-\lambda$. Decided that the latter change was not necessary}
  
For the analysis of the dual problem (D1), we need the gradient of $\theta$, which we consider next.

 \begin{remark}\label{rem:gradient of theta} \rm
 Let the gradient of $\theta$ w.r.t.~$p$ be denoted as $\nabla_p \theta := \partial\theta / \partial p$.  The definition~\eqref{vartheta_p} directly yields
	\begin{equation} \label{grad_vartheta}
		\nabla_p\theta(p,t) = \left\{\begin{array}{ll}
			\overline{a}\,b\,, 
			& \mbox{ if\ \ } b^T\,p >  r\,\overline{a}\,, \\[2mm]
			b\,b^T\,p / r\,,
			& \mbox{ if\ \ } r\,\underline{a} \le b^T\,p
			\le  r\,\overline{a}\,, \\[2mm]
			\underline{a}\,b\,, 
			& \mbox{ if\ \ } b^T\,p <  r\,\underline{a}\,.
		\end{array}\right.
	\end{equation}
	By comparing \eqref{opt_u(t)} and \eqref{grad_vartheta}, and using the fact that $p = -\lambda$, one obtains a relationship with the control variable of Problem~(P) as follows.
	\begin{equation} \label{biui}
		\nabla_p\theta(p(t),t) = \nabla_p\theta(-\lambda(t),t) = b\,u(t)\,.
	\end{equation}
 \proofbox
 \end{remark}
	
	The objective functional in Problem~(D1) is in the so-called {\em Bolza form}, which contains both an integral term and a term involving endpoints, and can be converted into the {\em Lagrange form}, which contains only an integral term.  In the following proposition, we convert the initial and terminal costs in the objective function into the Lagrange form, by using the differential equations for $x$ and $p$ in Problems~(P) and (D1), respectively.
			
 \begin{proposition}
 Consider the notation of problem (D1).  Fix $u\in \mathcal{A}$ and take
 $x$ as the corresponding solution of the ODE system in $\mathcal{A}$ with boundary conditions \eqref{eq:BC}. Let $p$ be such that it verifies the constraints of (D1) (i.e., $\dot{p} = -A^T\,p$). Then,
\begin{equation}  
\label{e:perpscal}
x_f^T\,p(1) - x_0^T\,p(0)  =\langle u, b^Tp\rangle\,.
\end{equation}
In particular, we have that
\begin{equation}  
\label{e:perpscal2}
x_f^T\,p(1) - x_0^T\,p(0) 
 =\langle a^\perp, b^Tp\rangle\,,
\end{equation}
for every $p$ s.t. $\dot{p} = -A^T\,p$. Consequently, the objective functional of (D1) can be rewritten as
\begin{equation} 
\label{Lagr_form}
		\int_0^1 \left[\theta(p(t),t)\, 
		- \langle \nabla_p\theta(p(t),t), p\rangle \right]\,dt\,,
\end{equation}
where
\begin{equation} \label{grad_vartheta_p}
		\langle \nabla_p\theta(p,t), p \rangle = \left\{\begin{array}{ll}
			\overline{a}\,b^T\,p\,, 
			& \mbox{ if\ \ } b^T\,p >  r\,\overline{a}\,, \\[2mm]
			(b^T\,p)^2 / r\,,
			& \mbox{ if\ \ } r\,\underline{a} \le b^T\,p
			\le  r\,\overline{a}\,, \\[2mm]
			\underline{a}\,b^T\,p\,, 
			& \mbox{ if\ \ } b^T\,p < r\,\underline{a}\,.
\end{array}\right.
\end{equation}
\end{proposition}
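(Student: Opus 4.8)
The plan is to prove \eqref{e:perpscal} first by a direct integration-by-parts argument, then obtain \eqref{e:perpscal2} as a consequence of Lemma~\ref{lem:aperp}, and finally assemble \eqref{Lagr_form} by substituting into the objective of (D1) together with Remark~\ref{rem:gradient of theta}. For \eqref{e:perpscal}, I would start from the quantity $x_f^Tp(1)-x_0^Tp(0)=\langle x(1),p(1)\rangle-\langle x(0),p(0)\rangle$, using the boundary conditions \eqref{eq:BC}. The natural idea is to recognize this as $\int_0^1 \tfrac{d}{dt}\langle x(t),p(t)\rangle\,dt$, which is legitimate since $x\in W^{1,2}$ and $p\in W^{1,2}$ make $t\mapsto\langle x(t),p(t)\rangle$ absolutely continuous. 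Expanding the derivative gives $\langle\dot x,p\rangle+\langle x,\dot p\rangle$, and then I would substitute the dynamics $\dot x=Ax+b^Tu$ (since $u\in\mathcal A$ with the corresponding $x$) and $\dot p=-A^Tp$ (the constraint of (D1)). The term $\langle Ax,p\rangle+\langle x,-A^Tp\rangle=\langle Ax,p\rangle-\langle Ax,p\rangle$ cancels pointwise, leaving $\int_0^1\langle b^T(t)u(t),p(t)\rangle\,dt=\int_0^1 u(t)\,b^T(t)p(t)\,dt=\langle u,b^Tp\rangle$, which is exactly \eqref{e:perpscal}.

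For \eqref{e:perpscal2}, I would argue as follows. The left-hand side $x_f^Tp(1)-x_0^Tp(0)$ depends only on $p$ (through its boundary values) and not on the choice of $u\in\mathcal A$. Hence, by \eqref{e:perpscal}, the value $\langle u,b^Tp\rangle$ is the same for every $u\in\mathcal A$; equivalently $\langle u_1-u_2,b^Tp\rangle=0$ for all $u_1,u_2\in\mathcal A$, i.e.\ $b^Tp\in(\mathcal A-\mathcal A)^\perp$. By Lemma~\ref{lem:aperp}\eqref{lem:aperp:i}, any $u\in\mathcal A$ can be written $u=a^\sperp+v$ with $v\in\mathcal A-\mathcal A$, so $\langle u,b^Tp\rangle=\langle a^\sperp,b^Tp\rangle+\langle v,b^Tp\rangle=\langle a^\sperp,b^Tp\rangle$. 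Combining with \eqref{e:perpscal} gives \eqref{e:perpscal2}. (I note the statement writes $a^\perp$ where I expect $a^\sperp$; I would keep the paper's notation $a^\sperp$ from \eqref{eq:a}.)

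Finally, for \eqref{Lagr_form} I would substitute \eqref{e:perpscal} into the objective of (D1): the Bolza term $-(x_f^Tp(1)-x_0^Tp(0))$ becomes $-\langle u,b^Tp\rangle=-\int_0^1 u(t)\,b^T(t)p(t)\,dt$. By Remark~\ref{rem:gradient of theta}, specifically \eqref{biui}, we have $b\,u(t)=\nabla_p\theta(p(t),t)$ for the optimal relationship $p=-\lambda$; more directly, I would verify from \eqref{opt_u(t)} and \eqref{grad_vartheta} that $b^T(t)p(t)\,u(t)=\langle\nabla_p\theta(p(t),t),p(t)\rangle$ pointwise in each of the three branches, which is precisely the formula \eqref{grad_vartheta_p} (e.g.\ in the middle branch $u=-b^T\lambda/r=b^Tp/r$ gives $u\,b^Tp=(b^Tp)^2/r$). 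Substituting yields the integrand $\theta(p(t),t)-\langle\nabla_p\theta(p(t),t),p(t)\rangle$, which is \eqref{Lagr_form}.

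The main obstacle I anticipate is a subtle one about the scope of the claim \eqref{Lagr_form}: strictly, the identity $b\,u=\nabla_p\theta(p,\cdot)$ holds along the optimal pair where $u$ and $p$ are linked, so the reduction of the Bolza objective to the Lagrange integrand \eqref{Lagr_form} is an identity \emph{between the objective values at feasible $p$ with $u$ the associated optimal control}, rather than a formal rewriting valid for arbitrary $p$. I would be careful to state that the rewriting uses \eqref{e:perpscal} with the $u$ determined by $p$ via \eqref{opt_u(t)} (equivalently via \eqref{grad_vartheta}), so that $\langle u,b^Tp\rangle$ in \eqref{Lagr_form} is replaced by the purely $p$-dependent expression $\langle\nabla_p\theta(p,\cdot),p\rangle$; the pointwise branch-by-branch check establishing \eqref{grad_vartheta_p} is then routine.
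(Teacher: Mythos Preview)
Your proof of \eqref{e:perpscal} is essentially identical to the paper's: both use the Fundamental Theorem of Calculus on $t\mapsto\langle x(t),p(t)\rangle$, substitute the dynamics $\dot x=Ax+b^Tu$ and $\dot p=-A^Tp$, and observe the $A$-terms cancel. Your treatment of \eqref{Lagr_form} via \eqref{biui} also matches the paper's.

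The one genuine difference is in \eqref{e:perpscal2}. You take a detour through Lemma~\ref{lem:aperp}\eqref{lem:aperp:i}: you first argue that the left-hand side is independent of the choice of $u\in\cal A$, deduce $b^Tp\in(\cal A-\cal A)^\perp$, and then decompose $u=a^\sperp+v$ to strip off the $v$-contribution. The paper's argument is shorter: since $a^\sperp=P_{\cal A}0\in\cal A$ by definition, one may simply apply \eqref{e:perpscal} with the particular choice $u:=a^\sperp$, and \eqref{e:perpscal2} is immediate. Your route is correct and in fact proves the extra fact $b^Tp\in(\cal A-\cal A)^\perp$ along the way (which the paper establishes separately later, in Theorem~\ref{t:calc:g^*}\eqref{t:calc:g^*:i}), but for the proposition itself the paper's one-line specialization is more economical.
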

 
\begin{proof} Equation \eqref{e:perpscal2}  will follow from \eqref{e:perpscal} and the fact that, by its definition (see \eqref{eq:a}), $a^\perp \in \mathcal{A}$. Thus, we proceed now to establish \eqref{e:perpscal}. Indeed, using \eqref{eq:BC} and the Fundamental Theorem of Calculus gives
	\begin{align*}
		x_f^T\,p(1) - x_0^T\,p(0) &= x^T(1)\,p(1) - x^T(0)\,p(0)
		= \int_0^1 \frac{d}{dt}\langle x(t), p(t) \rangle\, dt 
		\nonumber
		\\
		&= \int_0^1 \big( \langle \dot{x}(t), p(t) \rangle
		+ \langle x(t), \dot{p}(t) \rangle \big)\,dt 
		\nonumber
		\\
		&= \int_0^1 \big( \langle A(t)\,x(t) + b(t)\,u(t), p(t) \rangle
		+ \langle x(t), -A^T(t)\, p(t) \rangle \big)\,dt 
		\nonumber
		\\
		&= \int_0^1 \big( \langle A(t)\,x(t), p(t) \rangle 
		+ \langle b(t)\,u(t), p(t) \rangle
		- \langle A(t)\,x(t), p(t) \rangle \big)\,dt 
		\nonumber
		\\
		&= \int_0^1 \langle b(t)\,u(t), p(t) \rangle\, dt\,=  \langle b\,u, p\rangle=\langle u, b^Tp\rangle\,,
	\end{align*}
 where we used the fact that $u\in \mathcal{A}$, the definition of $x$, and the fact that $p$ verifies the constraints of (D1). This proves \eqref{e:perpscal}.
Now, using \eqref{biui}, the objective functional of (D1) can equivalently be written in the Lagrange form as in 
\eqref{Lagr_form}.
Finally \eqref{grad_vartheta_p} 
follows from~\eqref{grad_vartheta}.
%The rest of the proof follows immediately.
\end{proof}

Next we collect the previous results to derive a simple form for the dual, where the min in (D1) was replaced by max in order to avoid negative signs in the objective function. 

 \begin{corollary}
 Problem~(D1) in the so-called Lagrange form is
	\[
	\mbox{(D1neat) }\left\{\begin{array}{rl}
		\ds\max_{p(\cdot)} &\ \ \ds\int_0^1 \phi(p(t),t)\,dt  \\[5mm]
		\mbox{subject to} &\ \ \dot{p}(t) = -A^T\,p(t)\,,\ \ \mbox{for all\ } t\in[0,1]\,,
	\end{array} \right.
	\]
	where
	\[
		\phi(p,t) := \left\{\begin{array}{ll}
			r\,\overline{a}^2 / 2\,, 
			& \mbox{ if\ \ } b^T\,p >  r\,\overline{a}\,, \\[2mm]
			(b^T\,p)^2 / (2\,r)\,,
			& \mbox{ if\ \ } r\,\underline{a} \le b^T\,p \le r\,\overline{a}\,, \\[2mm]
			r\,\underline{a}^2 / 2\,, 
			& \mbox{ if\ \ } b^T\,p < r\,\underline{a}\,.
		\end{array}\right.
	\]
\end{corollary}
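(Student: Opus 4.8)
The plan is to start from the Lagrange-form objective \eqref{Lagr_form}, namely $\int_0^1[\theta(p,t)-\langle\nabla_p\theta(p,t),p\rangle]\,dt$, which the preceding proposition has already established as an equivalent rewriting of the (D1) objective under the constraint $\dot p=-A^Tp$. Since (D1) is a minimization of this functional, passing to $\max$ of its negative gives $\max_{p(\cdot)}\int_0^1[\langle\nabla_p\theta(p,t),p\rangle-\theta(p,t)]\,dt$ subject to the same ODE constraint. Hence it suffices to show, pointwise in $t$, that
\[
\phi(p,t)=\langle\nabla_p\theta(p,t),p\rangle-\theta(p,t).
\]

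First I would do the computation branch by branch using the explicit formulas \eqref{vartheta_p} for $\theta$ and \eqref{grad_vartheta_p} for $\langle\nabla_p\theta(p,t),p\rangle$. In the upper branch $b^Tp>r\overline a$: $\langle\nabla_p\theta,p\rangle-\theta=\overline a\,b^Tp-(\overline a\,b^Tp-r\overline a^2/2)=r\overline a^2/2$. In the middle branch $r\underline a\le b^Tp\le r\overline a$: $(b^Tp)^2/r-(b^Tp)^2/(2r)=(b^Tp)^2/(2r)$. In the lower branch $b^Tp<r\underline a$: $\underline a\,b^Tp-(\underline a\,b^Tp-r\underline a^2/2)=r\underline a^2/2$. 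These are exactly the three cases defining $\phi(p,t)$, so the pointwise identity holds on all of $\dR^n\times[0,1]$, and integrating over $[0,1]$ and combining with the sign flip from $\min$ to $\max$ yields (D1neat).

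There is essentially no obstacle here: the result is a direct algebraic consequence of the proposition together with the closed-form expressions already derived, and the only mild point worth a sentence is noting that the three branch conditions for $\phi$, $\theta$, and $\langle\nabla_p\theta,p\rangle$ coincide (they are all defined by the same trichotomy in $b^Tp$ relative to $r\underline a$ and $r\overline a$), so the case analysis is consistent and the piecewise-defined functions match branch for branch.
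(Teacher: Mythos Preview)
Your proof is correct and follows essentially the same approach as the paper, which simply says the result follows from substituting \eqref{vartheta_p} and \eqref{grad_vartheta_p} into \eqref{Lagr_form} and cancelling. Your branch-by-branch verification and the explicit remark about the $\min\to\max$ sign flip are exactly the ``subsequent cancellations'' the paper alludes to.
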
	
 \begin{proof}
This follows from the substitution of \eqref{vartheta_p} and \eqref{grad_vartheta_p} into \eqref{Lagr_form} and subsequent cancellations.  
 \end{proof}
 
	The optimization variable $p(\cdot)$ of Problems (D1) or (D1neat) is not the {\em dual variable} per se since it does not live in the same space as $u$.  We propose as the dual variable $w\in L^2([0,1];\dR)$ (in the same space as $u$) such that
	\begin{equation} \label{eq:w}
		w := b^T\,p\,.
	\end{equation}
 \begin{corollary}
	We can re-write the dual problem~(D1) as
	\[
	\mbox{(D) }\left\{\begin{array}{rl}
		\ds\min_{w(\cdot)} & \ \ \ds\int_0^1 \vartheta(w(t),t)\, dt - \left(x_f^T\,p(1) - x_0^T\,p(0)\right) \\[5mm]
		\mbox{subject to} & \ \ \dot{p}(t) = -A^T(t)\,p(t)\,,\quad w(t) = b^T(t)\,p(t)\,,\ \ \mbox{for all\ } t\in[0,1]\,,
	\end{array} \right.
	\]
	where, after omitting dependence on $t$ again for clarity in appearance, gives
	\begin{equation} \label{vartheta_w}
		\vartheta(w,t) := \left\{\begin{array}{ll}
			\overline{a}\,w - r\,\overline{a}^2 / 2\,, 
			& \mbox{ if\ \ } w >  r\,\overline{a}\,, \\[2mm]
			w^2 / (2\,r)\,,
			& \mbox{ if\ \ } r\,\underline{a} \le w
			\le  r\,\overline{a}\,, \\[2mm]
			\underline{a}\,w - r\,\underline{a}^2 / 2\,, 
			& \mbox{ if\ \ } w <  r\,\underline{a}\,.
		\end{array}\right.
	\end{equation}
\end{corollary}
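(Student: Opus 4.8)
The plan is to exploit the fact that the integrand $\theta(p,t)$ in \eqref{vartheta_p} depends on $p$ only through the scalar quantity $b^T(t)\,p$, so that the change of variable $w:=b^Tp$ from \eqref{eq:w} recasts the objective of (D1) in terms of $w$ without altering its value.

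First I would note that, since $b$ is continuous on $[0,1]$, the quantity $w:=b^Tp$ belongs to $L^2([0,1];\dR)$ whenever $p\in W^{1,2}([0,1];\dR^n)$; in particular $w$ is well defined for every $p$ admissible in (D1). Next, substituting $w=b^Tp$ into \eqref{vartheta_p} and comparing branch by branch, the guard $b^Tp>r\,\overline a$ becomes $w>r\,\overline a$, the guard $r\,\underline a\le b^Tp\le r\,\overline a$ becomes $r\,\underline a\le w\le r\,\overline a$, and similarly for the remaining case, while the corresponding values $\overline a\,b^Tp-r\overline a^2/2$, $(b^Tp)^2/(2r)$, $\underline a\,b^Tp-r\underline a^2/2$ turn into $\overline a\,w-r\overline a^2/2$, $w^2/(2r)$, $\underline a\,w-r\underline a^2/2$. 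Hence $\theta(p(t),t)=\vartheta(w(t),t)$ for a.e.\ $t$, with $\vartheta$ precisely the function in \eqref{vartheta_w}, and therefore $\int_0^1\theta(p(t),t)\,dt=\int_0^1\vartheta(w(t),t)\,dt$. The endpoint term $x_f^Tp(1)-x_0^Tp(0)$ is unaffected, so the objective of (D1) coincides with the one displayed in (D).

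It then remains to phrase the equivalence of the two programs. The admissible set of (D) is the set of pairs $(w,p)$ with $\dot p=-A^Tp$ on $[0,1]$ and $w=b^Tp$; the map $p\mapsto(b^Tp,p)$ is a bijection from the feasible set of (D1) onto that of (D), and by the previous step it preserves the value of the objective. Consequently (D1) and (D) have the same optimal value, and $p$ solves (D1) if and only if $(b^Tp,p)$ solves (D). I would also point out explicitly that, notwithstanding the notation $\min_{w(\cdot)}$, the variable $p$ is retained in (D), still appearing in the ODE constraint and in the endpoint term; thus (D) is to be read as a minimization over admissible pairs $(w,p)$ with $w$ a derived variable. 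There is no genuine obstacle in the argument — the only point requiring a word of care is this bookkeeping, since at a fixed $t$ the linear map $p\mapsto b^Tp$ need not be injective, but injectivity of that map is nowhere needed.
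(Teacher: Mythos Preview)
Your proof is correct and follows the same approach as the paper, namely the direct substitution of \eqref{eq:w} into (D1); the paper's proof consists of the single sentence ``Substitution of \eqref{eq:w} into (D1) furnishes the corollary.'' Your version is simply more explicit about the branch-by-branch verification and the bijection between feasible sets, which is fine.
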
	
\begin{proof}
   Substitution of \eqref{eq:w} into (D1) furnishes the corollary. 
\end{proof}

	\subsection{Proximity operators and verification of (D) as the dual of (P)}
% \textcolor{blue}{We start this section by reviewing some basic concepts and definitions from convex analysis that we use in the sequel.}	
%  Let ${\cal C}$ be a nonempty closed convex subset of a Hilbert space ${\cal H}$, with inner product $\langle \cdot, \cdot\rangle$ and its induced norm $\|\cdot\|_{\cal H}$.  Recall that $\iota_{\cal C}$ is the {\em indicator function} of ${\cal C}$ given by
%  \[
% \iota_{\cal C}(x) := \left\{\begin{array}{ll}
%     0\,, & \mbox{ if\ \ } x\in{\cal C}\,, \\
%     \infty\,, & \mbox{ otherwise},
%     \end{array}\right.
%  \]
% and the {\em normal cone to ${\cal C}$} is given by $N_{\cal C} :=\partial\iota_{\cal C}$, the {\em subdifferential of} $\iota_{\cal C}$. The shortest distance from a point $y\in {\cal H}$ to the set ${\cal C}$ is given by $d_{\cal C}(y):=\| y-P_{\cal C}(y)\|_{\cal H}$. 

 Let ${\cal C}$ be a nonempty closed convex subset of $L^2([0,1];\dR)$.
 % , with inner product $\langle \cdot, \cdot\rangle$ and its induced norm $\|\cdot\|_{\cal H}$. 
 Recall that $\iota_{\cal C}$ is the {\em indicator function} of ${\cal C}$ given by
 \[
\iota_{\cal C}(x) := \left\{\begin{array}{ll}
    0\,, & \mbox{ if\ \ } x\in{\cal C}\,, \\
    \infty\,, & \mbox{ otherwise},
    \end{array}\right.
 \]
and the {\em normal cone to ${\cal C}$} is given by $N_{\cal C} :=\partial\iota_{\cal C}$, the {\em subdifferential of} $\iota_{\cal C}$. The shortest distance from a point $y\in {\cal H}$ to the set ${\cal C}$ is given by $d_{\cal C}(y):=\| y-P_{\cal C}(y)\|$. 
	
Observe that problem~(P) can be written in a concise form as 
	\begin{equation}  
	\label{eq:problem:min:norm}
		\min_{u\in L^2([0,1];\dR)}  f(u) + g(u)\,,
	\end{equation}
	where 
	\begin{equation}  \label{eq:fg}
		f=\frac{r}{2}\norm{\cdot}_{L^2}^2+\iota_{\cal B}\quad\mbox{and}\quad g=\iota_{\cal A}\,,
	\end{equation}
	Let $f^*$ (respectively $g^*$) denote the {\em Fenchel conjugate} of $f$ (respectively $g$), defined by
	\begin{equation}
	    f^*(w)=\sup_{u\in L^2([0,1];\dR)}(\scal{u}{w}-f(u)).
	\end{equation}

	Recall that the Fenchel  dual of Problem~(P) is (see, e.g., \cite[Definition~15.10]{BC2017})
	\begin{equation}  \label{eq:dualproblem:min:norm}
		\min_{w\in L^2([0,1];\dR)}  f^*(w) + g^*(-w),
	\end{equation}
	where 
	\begin{equation}  
	\label{eq:f*g*}
		f^*(v) = \tfrac{1}{2r} \norm{v}_{L^2}^2 -\tfrac{r}{2} d_{\cal B}^2(v/r)\mbox{, }\quad g^* = \iota_{(\cal A-\cal A)^\perp} + \scal{\cdot}{a^\sperp},
	\end{equation}
	$a^\sperp = P_{\cal A}0$. The formula for $f^*$ can be deduced from \cite[Examples 12.21 and 13.4]{BC2017}, while the formula for $g^*$ can be deduced from \cite[Example 13.3(iii) and Proposition~13.23(iii)]{BC2017}.

	Recall that the {\em proximity operator}, or proximity mapping, of a functional $h$ is defined by \cite[Definition~12.23]{BC2017}:
	\begin{equation}  \label{def:prox}
		\prox_h(u) := \argmin_{y\in L^2([0,1];\dR)}
		\left(h(y) + \frac{1}{2}\norm{y - u}_{L^2}^2  \right)
	\end{equation}
	for any $u\in L^2([0,1];\dR$).

The next lemma extends \cite[Proposition~2.1]{BauBurKay2019}. The quoted proposition addresses the particular case of the double integrator, i.e., when $n:=2$, and the ODE system has $A:=\begin{bsmallmatrix} 0 & 1 \\ 0 & 0 \end{bsmallmatrix}$ and $b:=\begin{bsmallmatrix} 0 \\ 1 \end{bsmallmatrix}$. 
 
	\begin{lemma} \label{lem:proxfg}
		Suppose that Problem~(P) is written in the form in~\eqref{eq:problem:min:norm}. Then
		\begin{equation}  \label{eq:proxf}
			\prox_f(u)(t) = \left\{\begin{array}{ll}
				\overline{a}\,, & \mbox{ if\ \ } u(t) > (r+1)\,\overline{a}\,, \\[2mm]
				u(t) / (r+1)\,, & \mbox{ if\ \ } (r+1)\,\underline{a} \le u(t)
				\le (r+1)\,\overline{a}\,, \\[2mm]
				\underline{a}\,, & \mbox{ if\ \ } u(t) < (r+1)\,\underline{a}\,;
			\end{array}\right.
		\end{equation}
		and
		\begin{equation}  \label{eq:proxg}
			\prox_g(u)(t) = -b^T\lambda(t) + u(t)\,,
		\end{equation}
		where $\lambda$ solves $\dot{\lambda}(t) = -A^T\lambda(t)$.
	\end{lemma}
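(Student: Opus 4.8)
The plan is to treat the two proximity operators separately, exploiting that $f=\tfrac r2\norm{\cdot}_{L^2}^2+\iota_{\cal B}$ is a pointwise‑separable quadratic plus the indicator of the box ${\cal B}$, whereas $g=\iota_{\cal A}$ is the indicator of the affine subspace ${\cal A}$. For~\eqref{eq:proxf}: since $r$ is continuous and strictly positive on $[0,1]$ and ${\cal B}$ is nonempty, closed and convex, $f$ is proper, lower semicontinuous and convex, so $\prox_f$ is well defined and single‑valued. Substituting $f$ into~\eqref{def:prox}, the objective is $\tfrac12\int_0^1\bigl(r(t)\,y(t)^2+(y(t)-u(t))^2\bigr)\,dt$ minimized over $y$ with $y(t)\in[\underline a(t),\overline a(t)]$ a.e.; as both the integrand and the constraint are pointwise in $t$, the minimization decouples into scalar problems. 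Completing the square, $r(t)\,y^2+(y-u(t))^2=(r(t)+1)\bigl(y-\tfrac{u(t)}{r(t)+1}\bigr)^2+\mathrm{const}(t)$, so the minimizer over the interval is $y(t)=P_{[\underline a(t),\overline a(t)]}\bigl(u(t)/(r(t)+1)\bigr)$; writing out the three branches of the projection onto an interval gives~\eqref{eq:proxf}.

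For~\eqref{eq:proxg}: since ${\cal A}$ is nonempty (by controllability), closed and convex, $\prox_g=\prox_{\iota_{\cal A}}=P_{\cal A}$, and by the projection characterization for an affine subspace (equivalently, by Lemma~\ref{lem:aperp}\eqref{lem:aperp:iii} combined with Lemma~\ref{lem:aperp}\eqref{lem:aperp:ii}) one has $u-P_{\cal A}(u)\in({\cal A}-{\cal A})^\perp$. Thus the lemma reduces to the identity
\[
({\cal A}-{\cal A})^\perp=\menge{t\mapsto b^T(t)\lambda(t)}{\dot\lambda=-A^T\lambda},
\]
since then $u-P_{\cal A}(u)=b^T\lambda$ for some solution $\lambda$ of the adjoint equation, which is exactly~\eqref{eq:proxg}; the relevant $\lambda$ is singled out by requiring $u-b^T\lambda\in{\cal A}$.

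To prove this identity I would use variation of constants. Let $\Phi(\cdot,\cdot)$ be the state transition matrix of $\dot x=A(t)x$. A control $u$ lies in ${\cal A}-{\cal A}$ exactly when the state trajectory driven by $u$ from $x(0)=0$ reaches $x(1)=0$, i.e.\ $\int_0^1\Phi(1,s)b(s)u(s)\,ds=0$; hence ${\cal A}-{\cal A}=\ker M$, where $M\colon L^2([0,1];\dR)\to\dR^n$ is given by $Mu=\int_0^1\Phi(1,s)b(s)u(s)\,ds$. The range of $M$ is finite‑dimensional, hence closed, so $({\cal A}-{\cal A})^\perp=\ran M^\ast$, and a direct computation gives $(M^\ast c)(s)=b^T(s)\Phi^T(1,s)c$. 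Setting $\lambda(s):=\Phi^T(1,s)c$ and using $\partial_s\Phi(1,s)=-\Phi(1,s)A(s)$ shows $\dot\lambda=-A^T\lambda$, while conversely every solution of the adjoint equation has this form (take $c=\lambda(1)$); this establishes the identity. An alternative route, closer in spirit to Section~\ref{sec:optimality}, is to note that $P_{\cal A}(u)$ is the unique solution of the unconstrained control problem $\min_{v\in{\cal A}}\tfrac12\norm{v-u}_{L^2}^2$ and to apply the maximum principle: with Hamiltonian $\tfrac12(v-u)^2+\scal{\mu}{Ax+bv}$ and adjoint $\dot\mu=-A^T\mu$, stationarity in $v$ yields $v=u-b^T\mu$, which is~\eqref{eq:proxg}. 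The only nonroutine step is this identification of $({\cal A}-{\cal A})^\perp$ with the space of ``adjoint signals'' $b^T\lambda$: the abstract Hilbert‑space projection theory does not by itself expose the ODE structure of the complement, so one must bring in the state transition matrix — equivalently, the maximum principle for the auxiliary projection problem.
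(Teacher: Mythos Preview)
For $\prox_f$ your argument is essentially the paper's: both reduce the $L^2$ minimization to a pointwise scalar problem and read off the projection onto $[\underline a,\overline a]$.

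For $\prox_g$ your primary route differs from the paper's. The paper poses $P_{\cal A}(u)$ as the optimal control problem (Pg), writes the Hamiltonian $\tfrac12(y-u)^2+\lambda^T(Ax+by)$ with adjoint $\dot\lambda=-A^T\lambda$, and reads off~\eqref{eq:proxg} from the stationarity condition $\partial H/\partial y=0$ --- precisely the ``alternative route'' you sketch at the end. Your main argument instead identifies $({\cal A}-{\cal A})^\perp$ directly as $\ran M^\ast$ via the state transition matrix, and then invokes the abstract projection characterization $u-P_{\cal A}(u)\in({\cal A}-{\cal A})^\perp$. This is a genuine difference in logical structure: in the paper, the identity $S=({\cal A}-{\cal A})^\perp$ of Theorem~\ref{t:calc:g^*}\eqref{t:calc:g^*:i} is deduced \emph{from} Lemma~\ref{lem:proxfg}, whereas you prove that identity first and derive the lemma from it. Your approach is more elementary (it avoids the maximum principle, relying only on variation of constants and $(\ker M)^\perp=\ran M^\ast$) and delivers Theorem~\ref{t:calc:g^*}\eqref{t:calc:g^*:i} for free; the paper's approach is shorter given the control-theoretic machinery already set up in Section~\ref{sec:optimality}. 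Both are correct.
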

	\begin{proof}\ \\
		Using the functional $f$ in~\eqref{eq:fg} and the definition in~\eqref{def:prox},
		\[
		\prox_f(u) = \argmin_{y\in L^2([0,1];\dR)}    \left(\frac{r}{2}\norm{y}_{L^2}^2+\iota_{\cal B}(y) + \frac{1}{2}\norm{y - u}_{L^2}^2  \right)\,.
		\]
		In other words, finding $\prox_f(u)$ is finding $y$ that solves the problem
		\[
		\mbox{(Pf) }\left\{\begin{array}{rl}
			\ds\min_{y(\cdot)} & \ \ \ds\frac{1}{2}\int_0^1 \left(r(t)\,y^2(t) + \left(y(t) - u(t)\right)^2\right)\, dt \\[5mm]
			\mbox{subject to} & \ \ y\in{\cal B}\,.
		\end{array} \right.
		\]
		The solution to Problem~(Pf) is simply given by
		\[
		y(t) = \argmin_{\underline{a} \le v \le \overline{a}} \big(\, r\,v^2 + (v - u(t))^2\,\big)\,,
		\]
		which then, after straightforward manipulations, yields~\eqref{eq:proxf}.  The proximity operator of $g$ can  similarly be computed, using the functional $g$ in~\eqref{eq:fg} and \eqref{def:prox}:
		\[
		\prox_g(u) = \argmin_{y\in L^2([0,1];\dR)} \left(\iota_{\cal A}(y) + \frac{1}{2}\norm{y - u}_{L^2}^2  \right)\,,
		\]
		for any $u\in L^2([0,1];\dR)$.  In this case, finding $\prox_g(u)$ is finding $y$ that solves the problem
		\[
		\mbox{(Pg) }\left\{\begin{array}{rl}
			\ds\min_{y(\cdot)} & \ \ \ds\frac{1}{2}\int_0^1 \left(y(t) - u(t)\right)^2\, dt \\[5mm]
			\mbox{subject to} & \ \ \dot{x}(t) = A\,x(t) + b\,y(t)\,,\ \ x(0) = x_0\,,\ \ x(1) = x_f\,.
		\end{array} \right.
		\]
		Problem~(Pg) is a classical optimal control problem with $x$ the state variable vector and $u$ the scalar control variable.  Define the Hamiltonian function:
		\[
		H(x,y,\lambda,t) := \frac{1}{2} \left(y - u(t)\right)^2 + \lambda^T\left(A\,x(t) + b\,y(t)\right)\,,
		\]
		where $\lambda$ is the adjoint variable, defined as in Section~\ref{sec:optimality} as $\dot{\lambda}(t) = -A^T\lambda(t)$.  The necessary and sufficient condition of optimality for Problem~(Pg) is then given by
		\[
		\frac{\partial H}{\partial y}(x,y,\lambda,t) = y - u(t) + b^T\lambda = 0\,,
		\]
		which, when solved for $y$, yields~\eqref{eq:proxg}.
	\end{proof}
	
	\begin{remark} \label{rem:projA} \rm
		We note that Problem~(Pg) is nothing but the problem of finding a projection of $u$ onto ${\cal A}$; namely that $\prox_g(u) = {\cal P}_{\cal A}(u)$.  We also note from~\eqref{eq:proxf} that 
		\begin{equation}  \label{proj_ball}
		    \prox_f(u) = {\cal P}_{\cal B}(u/(r+1))\,.
		\end{equation}  
		Therefore, with $r = 0$, one recovers the projection onto the $L^\infty$-ball; namely,
		\begin{equation}
		\label{e:proj:ball}
		 {\cal P}_{\cal B}(u)(t)  
		  =
		\left\{\begin{array}{ll}
				\overline{a}\,, & \mbox{ if\ \ } u(t) > \,\overline{a}\,, \\[2mm]
				u(t) \,, & \mbox{ if\ \ } \,\underline{a} \le u(t)
				\le \,\overline{a}\,, \\[2mm]
				\underline{a}\,, & \mbox{ if\ \ } u(t) < \,\underline{a}\,.
			\end{array}\right.  
		\end{equation}
		  We also observe that $\prox_f(u)$ is piecewise-$C^1$, namely piecewise-linear and continuous, in $u$.
		\proofbox
	\end{remark}
% \textcolor{blue}{Recalling \eqref{eq:dualproblem:min:norm}, 
% it is clear that
% the Fenchel conjugate of $f$ and $g$} \textcolor{purple}{Regina: not clear how this sentence goes}.
% \textcolor{OliveGreen}{Walaa: I think it is a typo. Can be deleted once you see this response.}

We start by providing an alternative formula for $f^*$ which will be useful in the sequel.
	\begin{lemma}
		\label{lem:conj:f}
		We have 
		\begin{equation} \label{eq:f*}
			{f^*}(w) = \int_0^1 \vartheta(w,t)\,dt\,,
		\end{equation}
		where $\vartheta$ is defined as in~\eqref{vartheta_w}.
	\end{lemma}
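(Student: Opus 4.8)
The plan is to reduce the claimed identity to an elementary pointwise computation, starting from the closed-form expression for $f^*$ already recorded in~\eqref{eq:f*g*}, namely $f^*(w) = \tfrac{1}{2r}\norm{w}_{L^2}^2 - \tfrac{r}{2}\,d_{\cal B}^2(w/r)$. The key observation is that ${\cal B}$ is a box in $L^2([0,1];\dR)$ defined by a pointwise constraint, so (cf.\ Remark~\ref{rem:projA} and~\eqref{e:proj:ball}) the projection $P_{\cal B}$ acts pointwise, and hence
\[
d_{\cal B}^2(w/r) = \norm{w/r - P_{\cal B}(w/r)}_{L^2}^2 = \int_0^1 \big(w(t)/r - P_{[\underline{a}(t),\overline{a}(t)]}(w(t)/r)\big)^2\,dt.
\]
Substituting this into the formula for $f^*$, the assertion~\eqref{eq:f*} becomes the pointwise identity
\[
\tfrac{1}{2r}\,w^2 - \tfrac{r}{2}\big(w/r - P_{[\underline{a},\overline{a}]}(w/r)\big)^2 = \vartheta(w,t)
\]
for a.e.\ $t\in[0,1]$, the $t$-dependence being carried by $r=r(t)$, $\underline{a}=\underline{a}(t)$, $\overline{a}=\overline{a}(t)$.

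Next I would verify this pointwise identity by the three-case split dictated by~\eqref{e:proj:ball}. If $r\underline{a}\le w\le r\overline{a}$, then $P_{[\underline{a},\overline{a}]}(w/r)=w/r$, the distance term vanishes, and the left-hand side equals $w^2/(2r)$. If $w> r\overline{a}$, then $P_{[\underline{a},\overline{a}]}(w/r)=\overline{a}$, and expanding $\tfrac{r}{2}(w/r-\overline{a})^2 = w^2/(2r) - \overline{a}\,w + \tfrac{r}{2}\overline{a}^2$ gives left-hand side $=\overline{a}\,w - \tfrac{r}{2}\overline{a}^2$. If $w< r\underline{a}$, the symmetric computation gives $\underline{a}\,w - \tfrac{r}{2}\underline{a}^2$. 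These three values are precisely the three branches of $\vartheta(w,t)$ in~\eqref{vartheta_w}, so integrating over $[0,1]$ yields~\eqref{eq:f*}.

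As an alternative route — essentially the same computation organized differently — one can bypass~\eqref{eq:f*g*} entirely: since $f$ is a separable integral functional with integrand $\varphi(u,t)=\tfrac{r(t)}{2}u^2+\iota_{[\underline{a}(t),\overline{a}(t)]}(u)$, conjugation commutes with integration, so $f^*(w)=\int_0^1 \varphi^*(w(t),t)\,dt$, where $\varphi^*(w,t)=\sup_{\underline{a}\le u\le\overline{a}}\big(w u-\tfrac{r}{2}u^2\big)$ is a one-dimensional concave maximization; its solution is the unconstrained maximizer $u=w/r$ clipped to $[\underline{a},\overline{a}]$, which again produces the three branches of $\vartheta$. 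I do not anticipate a genuine obstacle here; the only point needing a word of care is the justification that $P_{\cal B}$ (respectively the conjugation) decouples over $t$, which for the box ${\cal B}$ is immediate from its definition as a pointwise constraint, after which the proof is the routine case analysis above.
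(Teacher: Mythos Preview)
Your proposal is correct and follows essentially the same approach as the paper: both start from the formula $f^*(w)=\tfrac{1}{2r}\norm{w}^2-\tfrac{r}{2}d_{\cal B}^2(w/r)$ recorded in~\eqref{eq:f*g*}, use the pointwise action of $P_{\cal B}$ from~\eqref{e:proj:ball}, and finish with the three-case verification against~\eqref{vartheta_w}. The only cosmetic difference is that the paper first rearranges the expression algebraically to $f^*(w)=\scal{w}{P_{\cal B}(w/r)}-\tfrac{r}{2}\norm{P_{\cal B}(w/r)}^2$ before invoking the case split, whereas you expand $(w/r-P_{[\underline{a},\overline{a}]}(w/r))^2$ directly within each case; the arithmetic is identical.
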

	
	\begin{proof}
		It follows from
		\eqref{eq:f*g*} that 
		\begin{subequations}
			\begin{align}
				f^*(w)
				&=\tfrac{1}{2r}\norm{w}^2-\tfrac{r}{2}d^2_{\cal B}(w/r)
				\\
				&=\tfrac{1}{2r}\norm{w}^2-\tfrac{r}{2}\norm{w/r-\mathcal{P}_{\cal B}(w/r)}^2
				\\
				&=\tfrac{1}{2r}\norm{w}^2-\tfrac{r}{2}(\norm{w/r}^2+\norm{\mathcal{P}_{\cal B}(w/r)}^2-2\scal{w/r}{\mathcal{P}_{\cal B}(w/r)})
				\\
				&=\tfrac{r}{2}(2\scal{w/r}{\mathcal{P}_{\cal B}(w/r)}-\norm{\mathcal{P}_{\cal B}(w/r)}^2)
				\\
				&=\scal{w}{\mathcal{P}_{\cal B}(w/r)}-\tfrac{r}{2}\norm{\mathcal{P}_{\cal B}(w/r)}^2,
			\end{align}	
		\end{subequations}
		and the conclusion follows in view of \eqref{e:proj:ball} and \eqref{vartheta_w}.
	\end{proof}
 
In the next result, 
we provide a concrete formula for $g^*$.
As a byproduct, we obtain a formula 
for $(\cal A-\cal A)^\perp$.	
	
	\begin{theorem}
		\label{t:calc:g^*}
		Set 
		\begin{eqnarray} 
			S &:=& \big\{w\in L^{2}([0,1];\dR)\ |\ \, w(t) = b^T(t)\,p(t)\,, \mbox{ for all\ } t\in[0,1]\,, \nonumber \\[1mm] 
			&&\hspace{15mm} \mbox{ where } p \in W^{1,2}([0,1];\dR^n)\mbox{ solves } \dot{p}(t) = -A^T(t)\,p(t) \big\},
			\label{eq:sperp}
		\end{eqnarray}
		and set 
		\begin{equation} \label{e:g*}
			\widetilde{g}(w) = \iota_{S}(w) + \left(x_f^T\,p(1) - x_0^T\,p(0)\right),
		\end{equation}
		where $p$ solves $\dot{p}(t) = -A^T(t)\,p(t)$, and $w(0) = b^T(0)\,p(0)$ and $w(1) = b^T(1)\,p(1)$.  Then 
		the following hold:
		\begin{enumerate}
			\item
   \label{t:calc:g^*:ii}
			$(\forall w \in S)$ 
			we have 
			\begin{equation}\label{eq:aperp3}
				\scal{a^\sperp}{w}=x_f^T\,p(1) - x_0^T\,p(0)\,.
			\end{equation}	
			
			\item
			\label{t:calc:g^*:i}
			$S=(\cal A-\cal A)^\perp$.
			\item
			\label{t:calc:g^*:iii}
			$g^*=\widetilde{g}$.
		\end{enumerate}	
		% the dual relations 
		% \[
		% \prox_{\widetilde{f}}(w) = w - \prox_f(w)\quad\mbox{and}\quad \prox_{\widetilde{g}}(w) = w - \prox_g(w)
		% \]
		% hold.
	\end{theorem}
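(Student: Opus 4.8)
The plan is to prove the three items in the order \ref{t:calc:g^*:i}, \ref{t:calc:g^*:ii}, \ref{t:calc:g^*:iii}, since the characterization of $S$ as $(\mathcal{A}-\mathcal{A})^\perp$ is the engine that drives the other two. First I would show $S\subseteq(\mathcal{A}-\mathcal{A})^\perp$: take $w\in S$, so $w=b^Tp$ with $\dot p=-A^Tp$, and take any $v\in\mathcal{A}-\mathcal{A}$, say $v=u_1-u_2$ with $u_1,u_2\in\mathcal{A}$ and corresponding states $x_1,x_2$. The difference $\bar x:=x_1-x_2$ solves $\dot{\bar x}=A\bar x+b v$ with $\bar x(0)=\bar x(1)=0$. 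Then the computation already carried out in the proof of \eqref{e:perpscal} — integrating $\frac{d}{dt}\langle\bar x(t),p(t)\rangle$ and using $\dot p=-A^Tp$ — gives $\langle v,b^Tp\rangle=\langle v,w\rangle=\bar x^T(1)p(1)-\bar x^T(0)p(0)=0$. Hence $w\perp(\mathcal{A}-\mathcal{A})$.

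For the reverse inclusion $(\mathcal{A}-\mathcal{A})^\perp\subseteq S$, the cleanest route is a dimension/codimension argument: $\mathcal{A}-\mathcal{A}$ is the set of admissible directions, i.e.\ $\{v\in L^2 : \exists\,\bar x\in W^{1,2},\ \dot{\bar x}=A\bar x+bv,\ \bar x(0)=\bar x(1)=0\}$. Its orthogonal complement is the set of $w$ for which $\langle v,w\rangle=0$ for all such $v$. The adjoint ODE $\dot p=-A^Tp$ has an $n$-dimensional solution space (parametrized by $p(1)$, say), and the map $p\mapsto b^Tp$ into $L^2$ is injective because $b\not\equiv 0$ and $p$ cannot vanish on a set of positive measure unless $p\equiv 0$ (a solution of a linear homogeneous ODE vanishing on a positive-measure set is identically zero by uniqueness). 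So $S$ is an $n$-dimensional subspace sitting inside $(\mathcal{A}-\mathcal{A})^\perp$. To get equality one shows the codimension of $\mathcal{A}-\mathcal{A}$ in $L^2$ equals exactly $n$: this is precisely the statement that the "endpoint" map $v\mapsto \bar x(1)$ (with $\bar x(0)=0$) is onto $\dR^n$, which is the controllability hypothesis assumed in Section~2. Concretely, the reachable-set map has range all of $\dR^n$, so the constraint $\bar x(1)=0$ cuts down $L^2$ by exactly $n$ "effective" linear conditions, and those conditions are represented by the functionals $v\mapsto\langle v,b^Tp\rangle$ as $p(1)$ ranges over $\dR^n$ via the variation-of-constants formula $\bar x(1)=\int_0^1\Phi(1,t)b(t)v(t)\,dt$, where $\Phi$ is the state-transition matrix; pairing with $c\in\dR^n$ gives $c^T\bar x(1)=\langle v,\,b^T\Phi(1,\cdot)^Tc\rangle$ and $p(t):=\Phi(1,t)^Tc$ solves the adjoint equation. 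Thus $(\mathcal{A}-\mathcal{A})^\perp$ is exactly the span of these $w$'s, which is $S$. This controllability/codimension step is the main obstacle — everything else is bookkeeping.

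Given \ref{t:calc:g^*:i}, item \ref{t:calc:g^*:ii} is almost immediate: for $w\in S=(\mathcal{A}-\mathcal{A})^\perp$ write $w=b^Tp$ and pick any fixed $u\in\mathcal{A}$ with state $x$ satisfying \eqref{eq:BC}; by the Proposition (equation \eqref{e:perpscal}) we have $\langle u,b^Tp\rangle=x_f^Tp(1)-x_0^Tp(0)$. On the other hand, by Lemma~\ref{lem:aperp}\ref{lem:aperp:i} we have $u-a^\sperp\in\mathcal{A}-\mathcal{A}$, hence $\langle u-a^\sperp,w\rangle=0$, i.e.\ $\langle a^\sperp,w\rangle=\langle u,w\rangle=\langle u,b^Tp\rangle=x_f^Tp(1)-x_0^Tp(0)$, which is \eqref{eq:aperp3}. (Alternatively one can invoke \eqref{e:perpscal2} directly since $a^\sperp\in\mathcal{A}$.)

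Finally, for \ref{t:calc:g^*:iii} I combine \eqref{eq:f*g*}, which says $g^*=\iota_{(\mathcal{A}-\mathcal{A})^\perp}+\langle\cdot,a^\sperp\rangle$, with item \ref{t:calc:g^*:i}, which replaces $(\mathcal{A}-\mathcal{A})^\perp$ by $S$, and item \ref{t:calc:g^*:ii}, which replaces the linear term $\langle w,a^\sperp\rangle$ by $x_f^Tp(1)-x_0^Tp(0)$ on $S$ (and off $S$ both $g^*$ and $\widetilde g$ equal $+\infty$ because of the indicator). One small point to address is well-definedness of the expression $x_f^Tp(1)-x_0^Tp(0)$ as a function of $w$ alone: since $p\mapsto b^Tp$ is injective on the solution space of the adjoint equation (as noted above), each $w\in S$ determines $p$ uniquely, so $\widetilde g$ is a genuine function of $w$ and the identification $g^*=\widetilde g$ holds pointwise on all of $L^2([0,1];\dR)$.
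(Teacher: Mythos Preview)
Your proposal is correct and complete, but it takes a genuinely different route from the paper.

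The paper proves parts~\ref{t:calc:g^*:i} and~\ref{t:calc:g^*:iii} \emph{simultaneously and indirectly}: it writes down the optimal-control problem defining $\prox_{\widetilde g}$, observes that its solution coincides with the formula for $\Id-\prox_g$ obtained in Lemma~\ref{lem:proxfg}, and hence $\prox_{\widetilde g}=\prox_{g^*}$. By \cite[Corollary~24.7]{BC2017} this forces $g^*=\widetilde g+c$ for some constant $c$; equality of domains gives $S=(\mathcal A-\mathcal A)^\perp$, and evaluating at $0$ gives $c=0$. No explicit description of $(\mathcal A-\mathcal A)^\perp$ is ever computed. Your approach instead identifies $(\mathcal A-\mathcal A)^\perp$ directly: one inclusion by the integration-by-parts computation behind~\eqref{e:perpscal}, the other via the variation-of-constants representation of the endpoint map and its adjoint, using controllability to get surjectivity. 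Then~\ref{t:calc:g^*:iii} falls out of~\eqref{eq:f*g*} by substitution. Your argument is more elementary and more transparent about where controllability is used; the paper's argument is slicker but relies on the prox machinery and the ``equal prox $\Rightarrow$ equal up to constant'' fact.

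One small correction: your stated reason for injectivity of $p\mapsto b^Tp$ (``$p$ cannot vanish on a set of positive measure unless $p\equiv 0$'') is not the right justification, since $b^T(t)p(t)=0$ only says $p(t)\perp b(t)$, not $p(t)=0$. Injectivity of this map is exactly observability of the adjoint pair, which \emph{is} equivalent to the controllability you are already invoking --- but in fact you do not need injectivity at all. Your variation-of-constants computation at the end of that paragraph shows directly that every element of $(\mathcal A-\mathcal A)^\perp$ has the form $b^T\Phi(1,\cdot)^Tc$ for some $c\in\dR^n$, hence lies in $S$; combined with $S\subseteq(\mathcal A-\mathcal A)^\perp$ this gives equality without any dimension count. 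Similarly, well-definedness of $\widetilde g$ does not require injectivity either: by part~\ref{t:calc:g^*:ii} the scalar $x_f^Tp(1)-x_0^Tp(0)$ equals $\scal{a^\sperp}{w}$ and so depends only on $w$, regardless of which adjoint solution $p$ represents it.
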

	\begin{proof} 
		\eqref{t:calc:g^*:ii}:
  Note that \eqref{eq:aperp3} follows directly from \eqref{e:perpscal2} and the definition of $w$. \\
  	\eqref{t:calc:g^*:i}:
		Using the functional $\widetilde{g}$ in~\eqref{e:g*} and the definition in~\eqref{def:prox} we write
		\begin{align*}
		    \prox_{\widetilde{g}}(w) &= \argmin_{y\in L^2([0,1];\dR)}  \widetilde{g}(y) + \frac{1}{2}\,\int_0^1 (y(t) - w(t))^2\, dt \\
            &= \argmin_{y\in L^2([0,1];\dR)}  \iota_{S}(y) + x_f^T\,p(1) - x_0^T\,p(0) + \frac{1}{2} \int_0^1 (y(t) - w(t))^2\, dt\,.
        \end{align*}
		So, by using the definition of $S$ in \eqref{eq:sperp}, finding $\prox_{\widetilde{g}}(w)$ becomes finding $y$ that solves the problem
		\[
		\mbox{(Prox\~{g}) }\left\{\begin{array}{rl}
			\ds\min_{y(\cdot)} & \ \ x_f^T\,p(1) - x_0^T\,p(0) + 
			\ds\frac{1}{2}\int_0^1 \left(y(t) - w(t)\right)^2\, dt \\[5mm]
			\mbox{subject to} & \ \ \dot{p}(t) = -A^T(t)\,p(t)\,,\ \ y(t) = b^T(t)\,p(t)\,, \mbox{ for all\ } t\in[0,1]\,.
		\end{array} \right.
		\]
		Hence, 
		\[
  \begin{array}{rcl}
     \prox_{\widetilde{g}}(w)(t)   & 	=  &y(t) = b^T(t)\,p(t)= w(t) - (-b^T(t)\,p(t) + w(t)) \\
     &&\\
       & =&  w(t) - \prox_g(w)(t)=\prox_{g^*}(w)(t)\,,
  \end{array}
		\]
		where $\dot{p}(t) = -A^T(t)\,p(t)$, and Lemma~\ref{lem:proxfg} was used with $\lambda$ replaced by $p$ in the second to last equality. In the last equality, we used  \cite[Equation~(24.4)]{BC2017}. Therefore,
	we deduce that
		$\prox_{\widetilde{g}}=\prox_{g^*}$
		and by~\cite[Corollary~24.7]{BC2017} we obtain
		\begin{equation}
			\label{e:ft:gt}
			g^* = \widetilde{g} + c, 
		\end{equation}
		where $c$ is a constant. 
		Consequently, we deduce that $\dom g^* =\dom \widetilde{g} $. Combining this fact with \eqref{eq:f*g*} then \eqref{e:g*} yields
		\begin{equation}
			\label{e:S:A-A}
			S=(\cal A-\cal A)^\perp.	
		\end{equation}
	\eqref{t:calc:g^*:iii}:
		It follows from  \eqref{e:ft:gt}, \eqref{eq:aperp3} and \eqref{e:S:A-A}  that
		\begin{equation}
			\label{e:g*:c2}
			g^*=\widetilde{g}+c=	\iota_{(\cal A-\cal A)^\perp}+\scal{\cdot}{a^\sperp}+c.
		\end{equation}	  
We claim that $c  =0$.
		Indeed, 
		since $0=g^*(0)=\widetilde{g}(0)+c=c$. Hence our claim holds.
  This completes the proof.
	\end{proof}		

As a consequence, we have the following proposition which verifies that
 (D) is the Fenchel dual of (P).
	
	\begin{proposition}
		The dual problem~(D) is the Fenchel dual of Problem~(P).
	\end{proposition}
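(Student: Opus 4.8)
The plan is to start from the definition of the Fenchel dual of (P) recorded in \eqref{eq:dualproblem:min:norm}, namely $\min_{w\in L^2([0,1];\dR)} f^*(w)+g^*(-w)$, and to show that, upon inserting the explicit formulas for $f^*$ and $g^*$ obtained in Lemma~\ref{lem:conj:f} and Theorem~\ref{t:calc:g^*}, this problem coincides term by term with (D). No new analysis is required; the proposition is a bookkeeping consequence of the two preceding results.

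First I would handle the conjugate $g^*$. By Theorem~\ref{t:calc:g^*}\eqref{t:calc:g^*:iii} (equivalently \eqref{e:g*:c2} with the constant $c=0$), $g^*=\iota_{(\mathcal{A}-\mathcal{A})^\perp}+\langle\cdot,a^\sperp\rangle$. Since $(\mathcal{A}-\mathcal{A})^\perp$ is a linear subspace, it is symmetric under negation, so $\iota_{(\mathcal{A}-\mathcal{A})^\perp}(-w)=\iota_{(\mathcal{A}-\mathcal{A})^\perp}(w)$, and $\langle -w,a^\sperp\rangle=-\langle w,a^\sperp\rangle$; hence $g^*(-w)=\iota_{(\mathcal{A}-\mathcal{A})^\perp}(w)-\langle w,a^\sperp\rangle$. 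Invoking $S=(\mathcal{A}-\mathcal{A})^\perp$ from Theorem~\ref{t:calc:g^*}\eqref{t:calc:g^*:i} and the description of $S$ in \eqref{eq:sperp}, the effective domain constraint $w\in(\mathcal{A}-\mathcal{A})^\perp$ is exactly: there exists $p\in W^{1,2}([0,1];\dR^n)$ with $\dot p=-A^Tp$ and $w=b^Tp$ on $[0,1]$ — precisely the constraint appearing in (D).

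Next I would rewrite the remaining linear term on the feasible set. For $w\in S$, equation \eqref{eq:aperp3} of Theorem~\ref{t:calc:g^*}\eqref{t:calc:g^*:ii} gives $\langle a^\sperp,w\rangle=x_f^T\,p(1)-x_0^T\,p(0)$, so $-\langle w,a^\sperp\rangle=-\big(x_f^T\,p(1)-x_0^T\,p(0)\big)$. Combined with Lemma~\ref{lem:conj:f}, which identifies $f^*(w)$ with $\int_0^1\vartheta(w(t),t)\,dt$ for $\vartheta$ as in \eqref{vartheta_w}, the objective $f^*(w)+g^*(-w)$ equals $\int_0^1\vartheta(w(t),t)\,dt-\big(x_f^T\,p(1)-x_0^T\,p(0)\big)$ on the feasible set, which is exactly the objective of (D). Therefore problem \eqref{eq:dualproblem:min:norm} is literally (D).

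The only points needing a word of care — rather than genuine obstacles — are: (i) the boundary term $x_f^T\,p(1)-x_0^T\,p(0)$ must be well defined as a function of $w$ even though $p$ need not be uniquely recoverable from $w=b^Tp$, and this holds because \eqref{eq:aperp3} expresses it as $\langle a^\sperp,w\rangle$, which depends on $w$ alone; and (ii) the use of the symmetry $\iota_{(\mathcal{A}-\mathcal{A})^\perp}(-w)=\iota_{(\mathcal{A}-\mathcal{A})^\perp}(w)$, which is immediate from $(\mathcal{A}-\mathcal{A})^\perp$ being a linear subspace. With these observations the identification is complete.
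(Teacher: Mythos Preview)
Your proposal is correct and follows exactly the paper's approach: the paper's proof is the one-liner ``combine \eqref{eq:dualproblem:min:norm}, Lemma~\ref{lem:conj:f} and Theorem~\ref{t:calc:g^*},'' and you have simply spelled out in detail how those three pieces fit together. The additional care you take with the symmetry of $(\mathcal{A}-\mathcal{A})^\perp$ and the well-definedness of the boundary term via \eqref{eq:aperp3} is appropriate bookkeeping, not a departure from the paper's route.
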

	\begin{proof}
		The statement follows directly by combining \eqref{eq:dualproblem:min:norm},
		Lemma~\ref{lem:conj:f} and 
		Theorem~\ref{t:calc:g^*}.
	\end{proof}

\section{Douglas--Rachford Algorithm}

In this section, we introduce the Douglas--Rachford operator and derive its fixed point for the optimal control problem.  We also present the DR algorithm generating both the primal and dual sequences for an optimal control problem.

	\subsection{Douglas--Rachford operator and its fixed point}
	\label{sec:DR_fixed_pt}
	
	Let $u\in L^2(0,1;\dR)$.
	The Douglas--Rachford operator associated with
	the ordered pair $(f,g)$ is defined by
	\begin{equation}
		\label{def:genDR:fg}
		Tu=u -\prox_fu +\prox_g(2\prox_fu-u).   
	\end{equation}
	Set $\gamma := 1/(1+r)$.  Then
	it follows from, e.g., 
	\cite[Example~24.8(i)~and~Example~23.4]{BC2017}
	% 	that $\prox_f=P_{\cal{B}}\circ (\frac{1}{2}\Id)$
	that $\prox_f=P_{\cal{B}}\circ (\gamma\Id)$
	and  $\prox_g=P_{\cal{A}}$.
	Therefore, \eqref{def:genDR:fg} becomes
	\begin{equation}
		\label{def:DR:AB:Id}
		% 		Tu=u -P_{\cal{B}}(\tfrac{1}{2}u)
		% 		+P_{\cal{A}}\Big(2P_{\cal{B}}(\tfrac{1}{2}u)-u\Big).   
		Tu=u -P_{\cal{B}}(\gamma u)
			+P_{\cal{A}}\Big(2P_{\cal{B}}(\gamma u)-u\Big). 
	\end{equation}
	Observe that under appropriate constraint qualifications,
	e.g., $\cal{A}\cap\intr \cal{B}\not=\varnothing$, it is well-known that
	solving \eqref{eq:problem:min:norm} is equivalent to solving 
	the inclusion:
	\begin{equation}
		\label{e:inc:sd}
		\text{Find $u\in L^2(0,1;\dR)$\;\;\; such that\;\;\; }
		0\in u+N_{\cal{B}}u+N_{\cal{A}}u
		=u+N_{\cal{B}}u+(\cal{A}-\cal{A})^{\perp}.
	\end{equation}
	Similarly, solving 	\eqref{eq:dualproblem:min:norm} is equivalent to solving
	the Attouch--Th\'era dual problem of \eqref{e:inc:sd}; namely
	\begin{equation}
		\label{e:inc:dual}
		\text{Find $w\in L^2(0,1;\dR)$\;such that\; }
		0\in a^\sperp+P_{\cal{B}}(-w)+N_{(\cal{A}-\cal{A})^\perp}w
		=a^\sperp+P_{\cal{B}}(-w)+ \cal{A}-\cal{A}.
	\end{equation}
	Set
	\begin{equation}
		\label{e:p:sol}
		Z:=\menge{u\in L^2(0,1;\dR)}{0\in u+N_{\cal{B}}u+(\cal{A}-\cal{A})^{\perp}},
	\end{equation}	
	and set
	\begin{equation}
		\label{e:d:sol}
		K:=\menge{w\in L^2(0,1;\dR)}{0\in a^\sperp+P_{\cal{B}}(-w)+ \cal{A}-\cal{A}}.
	\end{equation}	
	We use $\fix T$ to denote the fixed point set of $T$ defined by $\fix T=\menge{x\in L^2(0,1;\dR)}{x=Tx}$.
	
	Interestingly, the fixed point set of $T$
	can be expressed using the sets $Z$ and $K$. 
	This is summarized in the following fact.
	
	\begin{fact}  \label{fact:prim+dual}
		Let $T$ be the Douglas--Rachford operator 
		defined in \eqref{def:DR:AB:Id}.
		Then
		\begin{equation}
  \label{eq:fixT:ZK}
			\fix T=Z+K.	
		\end{equation}	
	\end{fact}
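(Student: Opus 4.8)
The plan is to invoke the general theory of the Douglas--Rachford operator for the sum of two maximally monotone operators, as collected in \cite{BC2017}, rather than recomputing anything from scratch. Recall the standard decomposition of the fixed point set of $T$ (see, e.g., \cite[Proposition~26.1(iii)]{BC2017} or \cite[Corollary~28.3]{BC2017}): for the DR operator associated with $(\partial f,\partial g)=(N_{\cal B}+\tfrac r2\norm{\cdot}^2\text{-part},N_{\cal A})$, one has $\fix T = \menge{u+u^*}{u\in \operatorname{zer}(\partial f+\partial g),\ u^*\in(\partial f)u\cap(-(\partial g)u)}$, or equivalently $\fix T=J_{\partial f}(\fix R)$ where $R$ is the reflected resolvent composition. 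The first step is therefore to identify, in our concrete setting, the "primal part" with $Z$ as defined in \eqref{e:p:sol} — this is immediate from \eqref{e:inc:sd}, since $Z$ is precisely the zero set of $u\mapsto u+N_{\cal B}u+(\cal A-\cal A)^\perp$, which by \eqref{eq:f*g*} is $\partial f+\partial g$ up to the identification $N_{\cal A}=N_{\cal A-\cal A}=(\cal A-\cal A)^\perp$ on its domain.

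Next I would identify the "dual part" with $K$ from \eqref{e:d:sol}. By the self-duality of the DR operator \cite[Lemma~3.6]{EckThesis} (equivalently, \cite[Proposition~26.1(iv)(d)]{BC2017}), the complementary summand in $\fix T$ is the solution set of the Attouch--Th\'era dual inclusion \eqref{e:inc:dual}; by Theorem~\ref{t:calc:g^*} we have $N_{(\cal A-\cal A)^\perp}=\cal A-\cal A$ on the relevant domain and $\prox_{f^*}=P_{\cal B}\circ(-\Id)$-type behaviour via Lemma~\ref{lem:conj:f} together with the Moreau decomposition \cite[Equation~(24.4)]{BC2017}, so the dual inclusion reads exactly $0\in a^\sperp+P_{\cal B}(-w)+\cal A-\cal A$, whose solution set is $K$. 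Combining the two identifications with the general decomposition $\fix T=Z+K$ gives \eqref{eq:fixT:ZK}.

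The main obstacle is bookkeeping rather than depth: one must be careful that the abstract fixed-point decomposition is being applied with the correct pairing of resolvents and reflected resolvents, i.e.\ that "$Z$" really corresponds to the $J_{\partial f}$-image and "$K$" to the complementary term and not the other way round, and that the scaling $\gamma=1/(1+r)$ built into $\prox_f=P_{\cal B}\circ(\gamma\Id)$ is consistent with the normalization in the reference (where the stepsize is typically $1$). Since here $T$ is defined in \eqref{def:DR:AB:Id} directly in terms of $P_{\cal B}(\gamma\,\cdot\,)$ and $P_{\cal A}$, the cleanest route is to appeal to \cite[Proposition~26.1]{BC2017} with the maximally monotone operators $\partial f$ and $\partial g$ and the associated resolvents $\prox_f,\prox_g$, for which the stated fixed-point formula holds verbatim; the only thing to check is that $\operatorname{zer}(\partial f+\partial g)\neq\varnothing$, which is guaranteed by the constraint qualification $\cal A\cap\intr\cal B\neq\varnothing$ already invoked before \eqref{e:inc:sd}. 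Once these consistency points are dispatched, \eqref{eq:fixT:ZK} follows immediately.
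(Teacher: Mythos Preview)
Your approach and the paper's are the same in spirit: neither proves anything from first principles, both simply appeal to the general structure theory for the DR fixed-point set. The paper's entire proof is a single citation to \cite[Corollary~5.5(iii)]{BauBotHarMou2012}, which already delivers the identity $\fix T=Z+K$ as a genuine Minkowski sum under the paramonotonicity of both subdifferential operators (here $\partial f=r\Id+N_{\cal B}$ is strongly monotone and $\partial g=N_{\cal A}$ is a normal-cone operator, so both are paramonotone).

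There is, however, a real gap in your write-up. The formula you quote,
\[
\fix T=\menge{u+u^*}{u\in\operatorname{zer}(\partial f+\partial g),\ u^*\in\partial f(u)\cap(-\partial g(u))},
\]
is the image under addition of the \emph{extended solution set}, and in general this is \emph{not} the Minkowski sum $Z+K$: the pair $(u,u^*)$ is coupled, whereas $Z+K$ allows arbitrary pairing. Passing from the coupled description to $Z+K$ requires an additional argument---either the rectangularity of the extended solution set that comes from paramonotonicity (this is exactly what \cite[Corollary~5.5(iii)]{BauBotHarMou2012} supplies), or the observation that here $Z$ is a singleton by strong convexity of $f$, which forces every dual solution to be paired with the unique $\bar u$. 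You mention neither. The side remarks about ``$\prox_{f^*}=P_{\cal B}\circ(-\Id)$-type behaviour'' and the need to check $\operatorname{zer}(\partial f+\partial g)\neq\varnothing$ are also off: the former is not the correct expression for $\prox_{f^*}$, and the latter is irrelevant to the set identity (if $Z=\varnothing$, both sides of \eqref{eq:fixT:ZK} are empty; nonemptiness is handled separately in Fact~\ref{fact:ZK:nonemp}).
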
		
	\begin{proof}
		See	\cite[Corollary~5.5(iii)]{BauBotHarMou2012}.
	\end{proof}	 

The following fact provides a sufficient condition for the existence of a fixed point of the DR operator.
\begin{fact}
\label{fact:ZK:nonemp}
Let $f$ and $g$ be defined as in \eqref{eq:fg}.
Then $f$ is strongly convex.
Suppose that $\cal{A}\cap\intr \cal{B}\not=\varnothing$.
Then
\begin{equation}
\label{eq:ZK:nonemp}
Z\neq \varnothing 
\iff K\neq \varnothing 
\iff \fix T \neq \varnothing.
\end{equation}
Moreover, if $Z\neq \varnothing $ 
then $Z$ is a singleton.
 \end{fact}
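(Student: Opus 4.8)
The statement of Fact~\ref{fact:ZK:nonemp} bundles several claims: (a) $f$ is strongly convex; (b) under the constraint qualification $\mathcal{A}\cap\intr\mathcal{B}\neq\varnothing$, the three nonemptiness conditions $Z\neq\varnothing$, $K\neq\varnothing$, $\fix T\neq\varnothing$ are equivalent; and (c) if $Z\neq\varnothing$, then $Z$ is a singleton. I would prove these in the order (a), (c), (b).

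For (a): $f=\tfrac{r}{2}\|\cdot\|_{L^2}^2+\iota_{\cal B}$ by~\eqref{eq:fg}, and since $r$ is continuous and $\dR_{++}$-valued on the compact interval $[0,1]$, we have $\inf_{t\in[0,1]}r(t)=:\rho>0$. Hence $\tfrac{r}{2}\|\cdot\|_{L^2}^2-\tfrac{\rho}{2}\|\cdot\|_{L^2}^2=\tfrac12\int_0^1(r(t)-\rho)u^2(t)\,dt\geq 0$ is convex, so $f-\tfrac{\rho}{2}\|\cdot\|_{L^2}^2=\iota_{\cal B}+(\text{convex})$ is convex; thus $f$ is $\rho$-strongly convex (cf.\ \cite[Definition~10.5]{BC2017}).

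For (c): by~\eqref{e:inc:sd}, $u\in Z$ iff $0\in u+N_{\cal B}u+(\mathcal{A}-\mathcal{A})^\perp=\partial f(u)+\partial g(u)$, i.e.\ $u$ is a minimizer of $f+g$, which (under the constraint qualification guaranteeing $\partial(f+g)=\partial f+\partial g$) is exactly Problem~\eqref{eq:problem:min:norm}. Since $f$ is $\rho$-strongly convex and $g$ is convex, $f+g$ is $\rho$-strongly convex, hence has at most one minimizer; so $Z$ is a singleton whenever it is nonempty.

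For (b): I would close the cycle $Z\neq\varnothing\Rightarrow\fix T\neq\varnothing\Rightarrow K\neq\varnothing\Rightarrow Z\neq\varnothing$ using results already available. By Fact~\ref{fact:prim+dual}, $\fix T=Z+K$, so it suffices to show $Z\neq\varnothing\iff K\neq\varnothing$ together with the observation that $Z+K\neq\varnothing$ iff both $Z\neq\varnothing$ and $K\neq\varnothing$ (a sum of sets is nonempty iff each summand is). The equivalence $Z\neq\varnothing\iff K\neq\varnothing$ is the statement that the monotone inclusion~\eqref{e:inc:sd} has a solution iff its Attouch--Th\'era dual~\eqref{e:inc:dual} does; this is a standard fact about Attouch--Th\'era duality --- I would cite \cite[Proposition~2.6 or Theorem~3.1]{BauBotHarMou2012} (or the corresponding statement in \cite{BC2017}) to the effect that $\fix T\neq\varnothing$ is equivalent to each of $Z\neq\varnothing$ and $K\neq\varnothing$, the constraint qualification $\mathcal{A}\cap\intr\mathcal{B}\neq\varnothing$ ensuring that the relevant subdifferential sum rule holds so that zeros of $\partial f+\partial g$ coincide with minimizers. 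Assembling: the constraint qualification gives $\argmin(f+g)=\zer(\partial f+\partial g)=Z$, and since $f+g$ is strongly convex and (by the standing assumptions, $\mathcal{A}\neq\varnothing$ and $\mathcal{B}$ closed with $\mathcal{A}\cap\intr\mathcal{B}\neq\varnothing$) coercive, a minimizer exists, so in fact all three sets are nonempty --- but since the Fact only asserts the equivalence, I would present it as the cycle of implications above.

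**Main obstacle.** The only delicate point is making sure the constraint qualification $\mathcal{A}\cap\intr\mathcal{B}\neq\varnothing$ is used correctly to pass between "zero of $\partial f+\partial g$" and "minimizer of $f+g$" (the sum rule $\partial(f+g)=\partial f+\partial g$), and correctly invoking the Attouch--Th\'era duality statement from \cite{BauBotHarMou2012} in the infinite-dimensional $L^2$ setting. Everything else is routine: strong convexity of $f$ is immediate from positivity of $r$ on a compact interval, and uniqueness from strong convexity is standard.
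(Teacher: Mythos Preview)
Your proposal is correct and follows essentially the same route as the paper: strong convexity of $f$ is immediate from $\inf r>0$, the equivalence $Z\neq\varnothing\iff K\neq\varnothing$ is Attouch--Th\'era duality (the paper cites \cite[Corollary~3.2]{AT} or \cite[Theorem~7.1]{BauHarMou2014} together with the sum rule \cite[Corollary~16.48(ii)]{BC2017}), the remaining equivalence comes from $\fix T=Z+K$ in Fact~\ref{fact:prim+dual}, and uniqueness follows from strong convexity of $f+g$. The only differences are cosmetic---you spell out the strong-convexity constant and reorder the argument---so there is nothing substantive to add.
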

 \begin{proof}
The claim that $f$ is strongly convex is clear.
 We now turn to \eqref{eq:ZK:nonemp}.
The first equivalence 
follows from combining \cite[Corollary~16.48(ii)]{BC2017}
and \cite[Corollary~3.2]{AT} 
or \cite[Theorem~7.1]{BauHarMou2014}.
The second equivalence
is a direct consequence of \eqref{eq:fixT:ZK}
 and the first equivalence.
Finally, suppose that $Z\neq \varnothing$.
Because 
 $f$ is strongly convex,
 the result now follows from, 
 e.g., \cite[Corollary~28.3(v)]{BC2017}.
\end{proof}

% 	\begin{noteWM}
% 		We need to decide where we introduce this part.
% 		The dual problem in Section~\ref{sec:duality} does not look like the exact dual we look for.
% 		Also, to be able to apply DR to the dual we are restricted to the dual in 
% 		\eqref{eq:dualproblem:min:norm} and lives in a Hilbert space.
% 		Would the double integrator, as a special case, satisfy this setting?
% 		If yes, maybe we move the contents of this section after the double integrator section?
		
% 	\end{noteWM}

Fact \ref{fact:prim+dual} describes the structure of the set of fixed points of the Douglas--Rachford operator. Together with Fact \ref{fact:ZK:nonemp}, these two results imply that the sum of a primal solution and a dual solution produce a fixed point of $T$, as long as a primal solution exists. The following theorem provides the particular structure of $\fix T$ for the case of Problem~(P).  It also reconfirms the result in Fact~\ref{fact:prim+dual}.
 
	\begin{theorem}  \label{thm:fixedpt}
		If $\varphi$ is a fixed point of $T$, then
		\begin{equation}  \label{exp:varphi}
			\varphi(t) = 
			u(t) + w(t) = 
			\left\{\begin{array}{ll}
				\overline{a} - b^T(t)\,\lambda(t)\,, & \mbox{ if\ \ } -b^T(t)\,\lambda(t) > r\,\overline{a}\,, \\[2mm]
				-(1+r)\,b^T(t)\,\lambda(t)/r\,, & \mbox{ if\ \ } r\,\underline{a} \le -b^T(t)\,\lambda(t)
				\le r\,\overline{a}\,, \\[2mm]
				\underline{a} - b^T(t)\,\lambda(t)\,, & \mbox{ if\ \ } -b^T(t)\,\lambda(t) < r\,\underline{a}\,.
			\end{array}\right.
		\end{equation}
		where $u$ is the (unique) solution of the primal problem~{\em (P)} and $w$ is a solution of the dual problem~{\em (D)}. 
	\end{theorem}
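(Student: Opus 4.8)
The plan is to combine the structural description of $\fix T$ from Fact~\ref{fact:prim+dual} with the explicit computation of the projections already carried out in Lemma~\ref{lem:proxfg} and Remark~\ref{rem:projA}. By Fact~\ref{fact:prim+dual}, any $\varphi\in\fix T$ decomposes as $\varphi=u+w$ with $u\in Z$ and $w\in K$; by Fact~\ref{fact:ZK:nonemp} (using that $f$ is strongly convex), $Z$ is a singleton, and this $u$ is precisely the unique primal solution of~(P), while $w$ is a solution of the dual inclusion~\eqref{e:inc:dual}, hence a solution of~(D). So the first step is simply to invoke these two facts to write $\varphi=u+w$ and identify $u$ and $w$ with the primal and dual optimal solutions respectively.

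Next I would produce the pointwise formulas for $u$ and $w$ separately. For $u$: the Maximum Principle, specifically~\eqref{opt_u(t)}, already expresses the optimal control pointwise in terms of $-b^T(t)\lambda(t)$ where $\lambda$ solves $\dot\lambda=-A^T\lambda$. For $w$: recall $w=b^Tp$ by~\eqref{eq:w}, and the saddle-point/strong-duality relation noted after~(D1) gives $p=-\lambda$, so $w=-b^T\lambda$. One then adds the two expressions casewise. In the middle regime $r\underline a\le -b^T\lambda\le r\overline a$ we have $u=-b^T\lambda/r$ and $w=-b^T\lambda$, so $u+w=-(1+r)b^T\lambda/r$. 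In the upper regime $-b^T\lambda>r\overline a$ we get $u=\overline a$ and $w=-b^T\lambda$, so $u+w=\overline a-b^T\lambda$; symmetrically in the lower regime. This reproduces~\eqref{exp:varphi} exactly, and the case boundaries match because both $u$ and $w$ switch branches at $-b^T\lambda=r\overline a$ and $-b^T\lambda=r\underline a$.

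An alternative, more self-contained route avoids citing~\eqref{opt_u(t)} and works directly from the fixed-point equation. From $\varphi=T\varphi$ and the identities $\prox_f=P_{\cal B}\circ(\gamma\Id)$, $\prox_g=P_{\cal A}$ with $\gamma=1/(1+r)$, one has $P_{\cal B}(\gamma\varphi)=u$ and $P_{\cal A}(2u-\varphi)=u$, whence $w=\varphi-u=2u-\varphi-(2u-2\varphi)$ lies in $(\cal A-\cal A)^\perp$ (after using Lemma~\ref{lem:aperp} and that $2u-\varphi$ projects to $u$, so $u-(2u-\varphi)=\varphi-u\in(\cal A-\cal A)^\perp-a^\sperp$, i.e.\ $w-$ something perpendicular). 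Then by Theorem~\ref{t:calc:g^*}, $(\cal A-\cal A)^\perp=S$, so $w=b^Tp$ for some $p$ solving $\dot p=-A^Tp$, and from $u=P_{\cal B}(\gamma\varphi)=P_{\cal B}(\gamma(u+w))$ together with the explicit box projection~\eqref{e:proj:ball} one solves for $u$ pointwise: in the middle branch $u=\gamma(u+w)$ gives $u=\gamma w/(1-\gamma)=w/r=-b^T\lambda/r$, etc. Either way one lands on~\eqref{exp:varphi}.

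The main obstacle is bookkeeping rather than conceptual: one must verify that the branch conditions are consistent between the $u$-formula and the $w$-formula so that the casewise sum is well-defined, and that the "$p=-\lambda$" identification from~\cite{BurKayMaj2014} is legitimately available in our setting (it is, by the remark following Problem~(D1)). A minor point to state carefully is that Fact~\ref{fact:ZK:nonemp} guarantees $Z=\{u\}$ is exactly the primal optimizer — this uses strong convexity of $f$ via \cite[Corollary~28.3(v)]{BC2017} — and that $w\in K$ corresponds under~\eqref{e:inc:dual} to a genuine solution of~(D); both are already recorded in the excerpt, so the proof reduces to assembling these pieces and performing the three-case arithmetic.
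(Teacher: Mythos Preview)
Your \emph{alternative} route is essentially what the paper does: start from $T\varphi=\varphi$, set $\beta:=P_{\cal B}(\gamma\varphi)$, use Lemma~\ref{lem:proxfg} for $P_{\cal A}$ to obtain $\varphi(t)=\beta(t)-b^T(t)\lambda(t)$ with $\dot\lambda=-A^T\lambda$, substitute the explicit box projection for $\beta$, solve casewise for $\varphi$, and only at the very end recognise the result as $u+w$ via $w=-b^T\lambda$ and the relation $p=-\lambda$. Notably the paper does \emph{not} invoke Fact~\ref{fact:prim+dual} or Fact~\ref{fact:ZK:nonemp} inside the proof; the explicit formula is derived first and the decomposition $\varphi=u+w$ is read off afterwards (the paper even remarks that this ``reconfirms'' Fact~\ref{fact:prim+dual}).

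Your \emph{primary} route is a genuinely different organisation: you front-load the structural identity $\fix T=Z+K$ together with the singleton property of $Z$, and then assemble the piecewise formula by adding the Maximum-Principle expression~\eqref{opt_u(t)} for $u$ to $w=-b^T\lambda$. This is correct, and it makes the primal--dual split $\varphi=u+w$ the \emph{starting point} rather than a corollary. The cost is that you must import two external facts --- the Maximum-Principle formula for $u$ and the saddle-point identification $p=-\lambda$ from~\cite{BurKayMaj2014} --- and, as you rightly note, you must check that the adjoint $\lambda$ appearing in~\eqref{opt_u(t)} coincides with the one determined by the particular $w=\varphi-u\in K$ (equivalently, that every $w\in K$ is of the form $-b^T\lambda$ for an adjoint $\lambda$ for which~\eqref{opt_u(t)} holds). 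The paper's computation sidesteps this coupling issue because its $\lambda$ is produced directly by the projection $P_{\cal A}(2\beta-\varphi)$ via Lemma~\ref{lem:proxfg}, so no matching of two separately obtained $\lambda$'s is required; on the other hand, your route explains \emph{why} the answer has the form $u+w$ rather than discovering it after the fact.
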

	\begin{proof}
		Suppose that $\varphi$ is a fixed point of $T$.  Then $T\varphi = \varphi$ and \eqref{def:DR:AB:Id} can be re-written as
		\[
		{\cal P}_{\cal{B}}(\gamma \varphi) = {\cal P}_{\cal{A}}\Big(2{\cal P}_{\cal{B}}(\gamma \varphi)-\varphi\Big)\,.
		\]
		In other words, the problem is one of finding the fixed point $\varphi$ which solves the system of equations
		\begin{eqnarray}
			{\cal P}_{\cal{A}}\Big(2{\cal P}_{\cal{B}}(\gamma \varphi)-\varphi\Big) &=& \beta\,, \label{eq:fp1} \\[1mm]
			{\cal P}_{\cal{B}}(\gamma \varphi) &=& \beta\,.  \label{eq:fp2}
		\end{eqnarray}
		Note we can rewrite Equation \eqref{eq:fp1} using \eqref{eq:fp2} as ${\cal P}_{\cal{A}}(2\beta-\varphi) = \beta$.  From \eqref{eq:proxg} in Lemma~\ref{lem:proxfg} and Remark~\ref{rem:projA} we have ${\cal P}_{\mathcal{A}}(u)(t) = u(t) - b^T(t)\,\lambda(t)$, with $\dot{\lambda}(t) = -A^T(t)\,\lambda(t)$.  Therefore,
		\[
		{\cal P}_{\cal{A}}(2\beta-\varphi)(t) = (2\,\beta(t) - \varphi(t)) - b^T(t)\,\lambda(t) = \beta(t)\,,
		\]
		and, re-arranging,
		\begin{equation}  \label{exp:f}
			\varphi(t) = \beta(t) - b^T(t)\,\lambda(t)\,.
		\end{equation}
		It is straightforward to write down the projection in Equation~\eqref{eq:fp2} onto the box ${\cal B}$ as
		\[
		\beta(t) = \left\{\begin{array}{ll}
			\overline{a}\,, & \mbox{ if\ \ } \gamma\,\varphi(t) > \overline{a}\,, \\[2mm]
			\gamma\,\varphi(t)\,, & \mbox{ if\ \ } \underline{a} \le \gamma\,\varphi(t)
			\le \overline{a}\,, \\[2mm]
			\underline{a}\,, & \mbox{ if\ \ } \gamma\,\varphi(t) < \underline{a}\,.
		\end{array}\right.
		\]
		Using $\gamma = 1/(1+r)$, $\beta$ becomes
		\[
		\beta(t) = \left\{\begin{array}{ll}
			\overline{a}\,, & \mbox{ if\ \ } \varphi(t) > (1+r)\,\overline{a}\,, \\[2mm]
			\varphi(t)/(1+r)\,, & \mbox{ if\ \ } (1+r)\,\underline{a} \le \varphi(t)
			\le (1+r)\,\overline{a}\,, \\[2mm]
			\underline{a}\,, & \mbox{ if\ \ } \varphi(t) < (1+r)\,\underline{a}\,.
		\end{array}\right.
		\]
		Substituting this into~\eqref{exp:f},
		\[
		\varphi(t) = \left\{\begin{array}{ll}
			\overline{a} - b^T(t)\,\lambda(t)\,, & \mbox{ if\ \ } \varphi(t) > (1+r)\,\overline{a}\,, \\[2mm]
			\varphi(t)/(1+r) - b^T(t)\,\lambda(t)\,, & \mbox{ if\ \ } (1+r)\,\underline{a} \le \varphi(t)
			\le (1+r)\,\overline{a}\,, \\[2mm]
			\underline{a} - b^T(t)\,\lambda(t)\,, & \mbox{ if\ \ } \varphi(t) < (1+r)\,\underline{a}\,.
		\end{array}\right.
		\]
		Solving for $\varphi$ in the second equation above, we derive
		\begin{equation*}
			\varphi(t) = \left\{\begin{array}{ll}
				\overline{a} - b^T(t)\,\lambda(t)\,, & \mbox{ if\ \ } \varphi(t) > (1+r)\,\overline{a}\,, \\[2mm]
				-b^T(t)\,\lambda(t)/r - b^T(t)\,\lambda(t)\,, & \mbox{ if\ \ } (1+r)\,\underline{a} \le \varphi(t)
				\le (1+r)\,\overline{a}\,, \\[2mm]
				\underline{a} - b^T(t)\,\lambda(t)\,, & \mbox{ if\ \ } \varphi(t) < (1+r)\,\underline{a}\,,
			\end{array}\right.
		\end{equation*}
		and substituting $\varphi(t)$ in the domain expressions,
		\begin{equation*}
			\varphi(t) = \left\{\begin{array}{ll}
				\overline{a} - b^T(t)\,\lambda(t)\,, & \mbox{ if\ \ } -b^T(t)\,\lambda(t) > r\,\overline{a}\,, \\[2mm]
				-b^T(t)\,\lambda(t)/r - b^T(t)\,\lambda(t)\,, & \mbox{ if\ \ } r\,\underline{a} \le -b^T(t)\,\lambda(t)
				\le r\,\overline{a}\,, \\[2mm]
				\underline{a} - b^T(t)\,\lambda(t)\,, & \mbox{ if\ \ } -b^T(t)\,\lambda(t) < r\,\underline{a}\,,
			\end{array}\right.
		\end{equation*}
		where $\dot{\lambda}(t) = -A^T(t)\,\lambda(t)$.  Then a minor manipulation yields the right-hand side expression in~\eqref{exp:varphi}.   The fact that $p = -\lambda$ and $w(t) = -b^T(t)\,\lambda(t)$ facilitates $\varphi(t) = u(t) + w(t)$, $u$ the (unique) solution of the primal problem~(P) and $w$ a solution of the dual problem~(D), as the left-hand side expression in~\eqref{exp:varphi}.
	\end{proof}

	\subsection{The Algorithm}
	\label{sec:algo}
	Suppose that $\cal{A}\cap\intr \cal{B}\not=\varnothing$.	
The DR operator in~\eqref{def:DR:AB:Id} can be employed in an algorithm with clear steps (see Fact~\eqref{fact:DR:conv} below).  Each time the operator is applied it results in the primal iterate (update) in Step~5.  The dual iterate (update) 
is given in Step~4.

	\begin{algorithm} \rm {\bf (Douglas--Rachford)} \label{algo:DR_primal_dual}\ \rm
		\begin{description}
			\item[Step 1] ({\em Initialization}) Choose a parameter $\gamma\in\left(0,1\right)$ and the initial iterate $u_0$ arbitrarily. 
			Choose a small parameter $\varepsilon>0$, and set $k=0$. 
			\item[Step 2] ({\em Projection onto ${\cal B}$})  Set $u^- = \gamma u_{k}$. 
			Compute $\widetilde{u} = P_{{\cal B}}(u^-)$. 
			\item[Step 3] ({\em Projection onto ${\cal A}$}) Set $u^- := 2\widetilde{u}-u_k$. 
			Compute $\widehat{u} = P_{{\cal A}}(u^-)$.
			\item[Step 4] ({\em Dual update}) $w_k := u_k - \widetilde{u}$.
			\item[Step 5] ({\em Primal update}) Set $u_{k+1} := w_k + \widehat{u}$.
			\item[Step 6] ({\em Stopping criterion}) If $\|u_{k+1} - u_k\|_{L^\infty} \le \varepsilon$, then RETURN $\widetilde{u}$ and STOP.  
			Otherwise, set $k := k+1$ and go to Step 2.
		\end{description}
	\end{algorithm} 
 
 The following fact establishes strong convergence of the primal iterates and weak convergence of the dual iterates in Algorithm~\ref{algo:DR_primal_dual}.
 \begin{fact}
 \label{fact:DR:conv}
Suppose that $\cal{A}\cap\intr \cal{B}\not=\varnothing$.
Let 
$Z$ and $K$ be defined as in 
\eqref{e:p:sol} and \eqref{e:d:sol}
respectively.
Let $(u_k)_{k\in \mathbb{N}}$
 and $(w_k)_{k\in \mathbb{N}}$
 be defined as in Algorithm~\ref{algo:DR_primal_dual}.
 Then  $(\exists \overline{u},\,\overline{w} \in L^2([0,1];\dR))$
such that $Z=\{\overline{u}\}$,
$\overline{w}\in K$,
$u_k\to \overline{u}$,
 and $w_k\weakly \overline{w}$.
 \end{fact}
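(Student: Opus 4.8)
The plan is to recognize Fact~\ref{fact:DR:conv} as a specialization of the standard convergence theory for the Douglas--Rachford iteration, combined with the structural results already established in this section. First I would observe that the iteration in Algorithm~\ref{algo:DR_primal_dual} is, modulo the bookkeeping of the auxiliary variables $\widetilde u,\widehat u$ and the dual update, exactly the governed recursion $u_{k+1}=Tu_k$ for the operator $T$ in \eqref{def:DR:AB:Id}: substituting Step~2 into Step~3 and then Steps~3--4 into Step~5 gives $u_{k+1}=u_k-P_{\cal B}(\gamma u_k)+P_{\cal A}(2P_{\cal B}(\gamma u_k)-u_k)=Tu_k$, while the dual variable satisfies $w_k=u_k-P_{\cal B}(\gamma u_k)=u_k-\prox_f u_k$. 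Since $f$ and $g$ are proper, lower semicontinuous and convex and $T$ is firmly nonexpansive (it is the DR operator of the pair $(f,g)$), the hypothesis $\mathcal A\cap\intr\mathcal B\neq\varnothing$ together with Fact~\ref{fact:ZK:nonemp} guarantees $\fix T\neq\varnothing$, so by the classical weak-convergence theorem for averaged/firmly nonexpansive maps (e.g.\ \cite[Theorem~5.15]{BC2017} or the corresponding Douglas--Rachford statement \cite[Theorem~26.11]{BC2017}), the shadow sequence $\prox_f u_k$ converges weakly to a point $\ub\in Z$ and $u_k$ converges weakly to a fixed point of $T$.

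Next I would upgrade the weak convergence of the primal shadow to strong convergence. This is where the strong convexity of $f$ established in Fact~\ref{fact:ZK:nonemp} enters: when one of the two functions is strongly convex, the Douglas--Rachford shadow sequence converges in norm, not merely weakly. I would cite the strongly-convex refinement of the DR convergence theorem --- for instance \cite[Corollary~28.3]{BC2017} --- which under $\mathcal A\cap\intr\mathcal B\neq\varnothing$ (ensuring $Z\neq\varnothing$) yields that $Z$ is a singleton $\{\ub\}$ (already recorded in Fact~\ref{fact:ZK:nonemp}) and that $\prox_f u_k\to\ub$ strongly. Since the returned iterate $\widetilde u_k=P_{\cal B}(\gamma u_k)=\prox_f u_k$ is precisely this shadow sequence, this is the strong primal convergence claimed.

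Finally I would handle the dual iterates. From $w_k=u_k-\prox_f u_k$ and the fact that $u_k\weakly\varphi$ for some $\varphi\in\fix T$ while $\prox_f u_k\to\ub$ strongly, one gets $w_k\weakly\varphi-\ub=:\wb$. It remains to identify $\wb\in K$: by Fact~\ref{fact:prim+dual} we have $\fix T=Z+K$, so $\varphi=\ub+\wb$ with $\ub\in Z$ forces $\wb\in K$ (using that $Z=\{\ub\}$). Thus $\wb\in K$ and $w_k\weakly\wb$, completing the proof. The main obstacle --- really the only non-routine point --- is ensuring that the strongly-convex version of the Douglas--Rachford theorem applies verbatim in the infinite-dimensional $L^2$ setting and that the identification of the weak limit $\varphi-\ub$ with an element of $K$ is legitimate; both are covered by the quoted results, so I would keep the write-up short, assembling Fact~\ref{fact:prim+dual}, Fact~\ref{fact:ZK:nonemp}, and the cited convergence theorems, and verifying only the algebraic identity $u_{k+1}=Tu_k$ and $w_k=u_k-\prox_f u_k$ explicitly.
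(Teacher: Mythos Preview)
Your approach is correct and matches the paper's: the paper's own proof is the single line ``Combine Fact~\ref{fact:ZK:nonemp} and, e.g., \cite[Theorem~28.3(iv)\&(v)(b)]{BC2017},'' and your write-up is just an unpacking of that citation---you make explicit the identities $u_{k+1}=Tu_k$ and $w_k=u_k-\prox_f u_k$, invoke the strong-convexity clause of the DR convergence theorem via Fact~\ref{fact:ZK:nonemp}, and additionally route the identification $\overline{w}\in K$ through Fact~\ref{fact:prim+dual} rather than reading it off the cited theorem directly. The only point to flag is terminological: the strong convergence you establish is for the shadow sequence $\widetilde u_k=\prox_f u_k$, which you correctly note is the returned primal iterate, so your interpretation of ``$u_k\to\overline{u}$'' as referring to that sequence is the intended one.
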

\begin{proof}
Combine 
Fact~\ref{fact:ZK:nonemp} and, e.g., \cite[Theorem~28.3(iv)\&(v)(b)]{BC2017}.
\end{proof}

\begin{remark} \rm
The iterates in Algorithm~\ref{algo:DR_primal_dual} are functions, and in a numerical implementation of the algorithm, it is not possible to perform addition or scalar multiplication of functions.  Therefore these function iterates are represented by their discrete approximations, where each iterate, for example, $u_k$, is given as a vector in $\dR^N$, with components $u_{k,i} \approx u_k(t_i)$, and $t_{i+1} = t_i + i\,h$, $t_0 = 0$, $h = 1/N$, $i = 0,1,\ldots,N-1$.  One should note however that this is different from the direct discretization of the problem, where the problem itself is discretized and the discretized problem is dealt with as a finite-dimensional one, instead of the infinite-dimensional one here. In Step~6 of the algorithm the $L^\infty$ norm is used to measure the closeness of two consecutive iterates as the $L^\infty$ norm is a better (or realistic) norm to use for example than the $L^2$ norm in measuring the closeness of functions.
\proofbox
\end{remark}

\begin{remark} \rm
Controllability of the dynamical system subject to the bounds on the control variable ensures that a solution exists from any point to any other point in the state space.  This implies that the intersection of the sets $\cal{A}$ and the interior of $\cal{B}$ is nonempty.  In the case when the bounds on the control are too restricted, this intersection will be empty, which is also a realistic situation encountered in engineering problems, for example the motor torque being too small to achieve a target state, which requires a special treatment in finding a control approximate in some sense.  The latter (i.e., the infeasible case) is a subject of future investigation.
\proofbox
\end{remark}

\section{Numerical Experiments}

\subsection{Double integrator}

The double integrator is modelled as a special instance of~\eqref{ODE} with
\[
A = \left[\begin{array}{cc}
0 &\ 1 \\
0 &\ 0
\end{array}\right],\quad
b = \left[\begin{array}{c}
0 \\
1
\end{array}\right],
\]
$x_0 = (s_0, v_0)$ and $x_f = (s_f, v_f)$. Although Problem~(P) appears in a simple form with these choices for $A$ and $b$, since the control variable is constrained, it is not possible to get a solution analytically. Indeed,  numerical methods, such as the one we discuss in this paper, are needed to obtain an approximate solution.  The double integrator problem is simple and yet rich enough to study when introducing and illustrating many basic and new concepts or when testing new numerical approaches in optimal control---see, for example~\cite{BauBurKay2019, Kaya2020}, including the text book~\cite{Locatelli2017}.

The following two facts provide the projectors onto ${\cal A}$ and ${\cal B}$ for the minimum-energy control of the double integrator (see \cite[Proposition 2.1 and Proposition 2.2]{BauBurKay2019}).

\begin{fact}[Projection onto ${\cal A}$]  \label{cor:projA_PDI}
The projection $P_{\cal A}$ of $u^-\in {L}^2([0,1];\dR)$ onto the constraint set ${\cal A}$ is given by
\[
P_{{\cal A}}(u^-)(t) = u^-(t) + c_1\,t + c_2\,,
\]
for all $t\in[0,1]$, where
{\begin{eqnarray*}
&& c_1 := 12\left(s_0+v_0-s_f+\int_0^1 (1-\tau)u^-(\tau)d\tau\right)-6\left(v_0-v_f+\int_0^1 u^-(\tau)d\tau\right), \\[1mm]
&& c_2 := -6\left(s_0+v_0-s_f+\int_0^1 (1-\tau)u^-(\tau)d\tau\right)+2\left(v_0-v_f+\int_0^1 u^-(\tau)d\tau\right).
\end{eqnarray*}}
\end{fact}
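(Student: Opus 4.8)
The plan is to combine the special structure of the double-integrator dynamics with the general projection formula $P_{\cal A} = \prox_g$ already obtained in Lemma~\ref{lem:proxfg}. First I would write the state trajectory explicitly in terms of the control: from $\dot x_1 = x_2$, $\dot x_2 = u$ with $x(0) = (s_0,v_0)$ one gets $x_2(t) = v_0 + \int_0^t u(\tau)\,d\tau$ and, after interchanging the order of integration, $x_1(t) = s_0 + v_0\,t + \int_0^t (t-\tau)\,u(\tau)\,d\tau$. Evaluating at $t=1$ and imposing $x(1) = (s_f,v_f)$ shows that $u \in {\cal A}$ if and only if
\[
\int_0^1 (1-\tau)\,u(\tau)\,d\tau = s_f - s_0 - v_0
\qquad\text{and}\qquad
\int_0^1 u(\tau)\,d\tau = v_f - v_0 .
\]
Hence ${\cal A}$ is the affine subspace determined by these two linear equations, and $({\cal A}-{\cal A})^\perp$ is the two-dimensional space of affine functions $t \mapsto c_1 t + c_2$ (the Riesz representers of the two defining functionals).

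Next I would invoke Lemma~\ref{lem:proxfg} together with Remark~\ref{rem:projA}, which give $P_{\cal A}(u^-)(t) = u^-(t) - b^T(t)\,\lambda(t)$ where $\lambda$ solves $\dot\lambda = -A^T\lambda$. For the double integrator $A^T = \left[\begin{smallmatrix} 0 & 0 \\ 1 & 0 \end{smallmatrix}\right]$, so $\dot\lambda_1 = 0$ and $\dot\lambda_2 = -\lambda_1$; thus $\lambda_1$ is constant and $\lambda_2$ is affine in $t$. Since $b^T\lambda = \lambda_2$, this already yields $P_{\cal A}(u^-)(t) = u^-(t) + c_1 t + c_2$ for some scalars $c_1,c_2$, in agreement with the description of $({\cal A}-{\cal A})^\perp$ above. (Alternatively, one may bypass Lemma~\ref{lem:proxfg} entirely and argue straight from the variational characterization of the projection, using Lemma~\ref{lem:aperp} and the fact that $P_{\cal A}(u^-) - u^- \in ({\cal A}-{\cal A})^\perp$; Steps~1 and~3 then go through verbatim.)

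Finally I would pin down $c_1$ and $c_2$ by requiring $P_{\cal A}(u^-) \in {\cal A}$, i.e., by substituting $t \mapsto u^-(t) + c_1 t + c_2$ into the two integral constraints of the first step. Using $\int_0^1 (1-\tau)\,d\tau = \tfrac12$, $\int_0^1 (1-\tau)\tau\,d\tau = \tfrac16$, $\int_0^1 d\tau = 1$ and $\int_0^1 \tau\,d\tau = \tfrac12$, this produces the linear system
\[
\tfrac16 c_1 + \tfrac12 c_2 = -\Big(s_0+v_0-s_f+\textstyle\int_0^1 (1-\tau)u^-(\tau)\,d\tau\Big),
\qquad
\tfrac12 c_1 + c_2 = -\Big(v_0-v_f+\textstyle\int_0^1 u^-(\tau)\,d\tau\Big),
\]
whose coefficient matrix has determinant $\tfrac16 - \tfrac14 = -\tfrac1{12} \neq 0$; this nonsingularity is exactly the controllability of the double integrator, which also guarantees ${\cal A} \neq \varnothing$ so that $P_{\cal A}$ is well defined. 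Solving the $2\times 2$ system gives precisely the stated expressions for $c_1$ and $c_2$.

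The whole argument is elementary, since the conceptual content is carried by Lemma~\ref{lem:proxfg}; what remains is specialization and arithmetic. The only points that demand a little care are the interchange of the order of integration that produces the kernel $(t-\tau)$ in the formula for $x_1$, and the bookkeeping in inverting the $2\times 2$ system so that the constants come out in exactly the form claimed. I do not anticipate a genuine obstacle beyond that.
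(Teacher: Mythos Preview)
Your argument is correct. The paper itself does not prove this statement: it is recorded as a \emph{Fact} and attributed to \cite[Proposition~2.1]{BauBurKay2019}, so there is no in-paper proof to compare against. Your route via Lemma~\ref{lem:proxfg} and Remark~\ref{rem:projA} is natural in the context of this paper, since those results already identify $P_{\cal A}(u^-)-u^-$ with $-b^T\lambda$ for some adjoint trajectory, and for the double integrator this is automatically affine in $t$; the remaining $2\times2$ system and its solution are exactly as you describe. The alternative you mention (bypassing Lemma~\ref{lem:proxfg} and arguing directly from $P_{\cal A}(u^-)-u^-\in({\cal A}-{\cal A})^\perp$, the orthogonal complement being the span of $1$ and $t$) is in fact closer in spirit to how the cited reference proceeds, since that reference predates Lemma~\ref{lem:proxfg} of the present paper; either way the computation in your Step~3 is what fixes the constants.
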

We note that the integrals appearing in the expressions for $c_1$ and $c_2$ can only be evaluated numerically.

\begin{fact}[Projection onto \boldmath{${\cal B}$}]  \label{cor:projB}
The projection $P_{{\cal B}}$ of $u^-\in {L}^2([0,t_f];\dR)$ onto the constraint set ${\cal B}$ is given by
\begin{equation*}
P_{{\cal B}}(u^-)(t) = \left\{\begin{array}{rl}
\overline{a}\,, &\ \ \mbox{if\ \ } u^-(t)\ge \overline{a}\,, \\[1mm]
u^-(t)\,, &\ \ \mbox{if\ \ } \underline{a}\le u^-(t)\le \overline{a}\,, \\[1mm]
\underline{a}\,, &\ \ \mbox{if\ \ } u^-(t)\le \underline{a}\,, \\[1mm]
\end{array} \right.
\end{equation*}
for all $t\in[0,1]$. 
\end{fact}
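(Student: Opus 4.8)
The plan is to exploit the fact that the box $\mathcal{B}$ is \emph{separable in the time variable}: by definition $u\in\mathcal{B}$ iff $u(t)\in[\underline{a}(t),\overline{a}(t)]$ for a.e.\ $t\in[0,1]$, and $\mathcal{B}$ is already known to be nonempty, closed and convex in $L^2([0,1];\dR)$, so $P_{\mathcal{B}}$ is well defined and single-valued. Since $P_{\mathcal{B}}(u^-)=\argmin_{y\in\mathcal{B}}\tfrac12\int_0^1(y(t)-u^-(t))^2\,dt$ and the integrand is nonnegative with the constraint acting pointwise and independently at each $t$, the natural candidate is the function obtained by minimizing the integrand pointwise, namely $\bar y(t):=\operatorname{med}\{\underline{a}(t),u^-(t),\overline{a}(t)\}$, which (since $\underline{a}(t)<\overline{a}(t)$) is precisely the clamp on the right-hand side of the claimed formula. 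First I would record the elementary scalar fact that, for $\alpha<\beta$, the unique minimizer of $v\mapsto(v-c)^2$ on $[\alpha,\beta]$ equals $\beta$ if $c\ge\beta$, equals $c$ if $\alpha\le c\le\beta$, and equals $\alpha$ if $c\le\alpha$.

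Second, I would verify that $\bar y$ is an admissible competitor, i.e.\ $\bar y\in\mathcal{B}\subseteq L^2([0,1];\dR)$. Measurability holds because $\bar y$ is a continuous (clamping) function of the measurable triple $(\underline{a},u^-,\overline{a})$, using that $\underline{a},\overline{a}$ are continuous on $[0,1]$; square-integrability follows from the pointwise bound $|\bar y(t)|\le|u^-(t)|+\|\underline{a}\|_{L^\infty}+\|\overline{a}\|_{L^\infty}$ together with $u^-\in L^2([0,1];\dR)$.

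Third, rather than justifying directly the exchange of ``$\argmin$'' with the integral, I would close the argument via the obtuse-angle characterization of the projection onto a nonempty closed convex subset of a Hilbert space (e.g.\ \cite[Theorem~3.16]{BC2017}): $\bar y=P_{\mathcal{B}}(u^-)$ iff $\bar y\in\mathcal{B}$ and $\scal{u^--\bar y}{z-\bar y}\le0$ for all $z\in\mathcal{B}$. Because the inner product is an integral, it suffices to check the pointwise inequality $\bigl(u^-(t)-\bar y(t)\bigr)\bigl(z(t)-\bar y(t)\bigr)\le0$ for a.e.\ $t$ and every $z\in\mathcal{B}$, which is immediate by cases: if $u^-(t)\ge\overline{a}(t)$ then $\bar y(t)=\overline{a}(t)$, so $u^-(t)-\bar y(t)\ge0$ while $z(t)-\bar y(t)=z(t)-\overline{a}(t)\le0$; symmetrically when $u^-(t)\le\underline{a}(t)$; and in the intermediate regime $u^-(t)-\bar y(t)=0$. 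Integrating over $[0,1]$ yields $\scal{u^--\bar y}{z-\bar y}\le0$, hence $\bar y=P_{\mathcal{B}}(u^-)$, which is the asserted formula.

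I do not expect a genuine obstacle here: the statement is quoted from \cite[Proposition~2.2]{BauBurKay2019} and its content is elementary. The only delicate point is the measurability/integrability bookkeeping in the second step together with the legitimacy of reducing the $L^2$ minimization to a pointwise one; routing the final verification through the obtuse-angle criterion avoids any measurable-selection subtleties and renders the proof purely pointwise and algebraic. (The displayed conditions should be read with the suppressed $t$-dependence of $\underline{a},\overline{a}$ restored, following the convention adopted in Section~\ref{DR_duality}.)
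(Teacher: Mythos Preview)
Your proposal is correct and in fact more thorough than anything the paper offers. The paper does not prove this Fact at all: it is simply quoted from \cite[Proposition~2.2]{BauBurKay2019}. The nearest thing to an in-paper argument is the proof of Lemma~\ref{lem:proxfg} together with Remark~\ref{rem:projA}, where the projection onto ${\cal B}$ is recovered from $\prox_f$ at $r=0$; but there the pointwise reduction ``the solution to (Pf) is $y(t)=\argmin_{\underline{a}\le v\le\overline{a}}(r v^2+(v-u(t))^2)$'' is merely asserted, without the measurability/integrability bookkeeping or the obtuse-angle verification you supply. Your route via \cite[Theorem~3.16]{BC2017} is a clean way to legitimize the exchange of integral and pointwise minimization and would stand on its own as a self-contained proof.
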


	Using $w = b^T(t)\,p(t) = -b^T(t)\,\lambda(t)$ in \eqref{opt_u(t)}, one can re-write the optimal control $u$ (the primal variable) in terms of the dual variable $w$ for the double integrator problem as	
	\begin{equation}  \label{eq:opt_u_&_w}
		u(t) = \left\{\begin{array}{ll}
			\overline{a}\,, &\ \ \mbox{if\ \ } w(t) > r\,\overline{a}\,, \\[1mm]
			w(t)/r\,, &\ \ \mbox{if\ \ } r\,\underline{a} \le w(t) \le r\,\overline{a}\,, \\[1mm]
			\underline{a}\,, &\ \ \mbox{if\ \ } w(t) < r\,\underline{a}\,;
		\end{array} \right.
	\end{equation}
	in other words, $u = \mathcal{P}_{\mathcal{B}}(w/r)$.
	
 Consider the case (as in~\cite{BauBurKay2019}) when $s_0 = s_f = v_f = 0$, $v_0 = 1$, $\underline{a} = -2.5$, $\overline{a} = 2.5$, $r = 1/3$ (i.e., $\gamma = 0.75$). From empirical evidence in Figure~1.3(a) in \cite{BauBurKay2019} $\gamma = 0.75$ provides optimal performance for this problem.  Application of Algorithm~\ref{algo:DR_primal_dual} yields the graphical solutions as shown in Figure~\ref{fig:PDI_duality}.  With $u_0 = 0$ and $\varepsilon = 10^{-8}$, the algorithm has converged in 127 iterations.  The solid (blue) plot in Figure~\ref{fig:PDI_duality} is the optimal primal control solution denoted by $u$ and the dotted (yellow) plot is the fixed point of the DR operator denoted by $\varphi$.  Recall by Theorem~\ref{thm:fixedpt} that $\varphi = u + w$, which is reconfirmed by the solution curves in Figure~\ref{fig:PDI_duality}.  Finally, a close inspection of the plots reveals that the optimality condition~\eqref{eq:opt_u_&_w} is verified, reconfirming the optimality of $u$.  This check is something that was not possible to do in~\cite{BauBurKay2019}.
 
    \begin{figure}[t!]
		\centering
		\includegraphics[width=100mm]{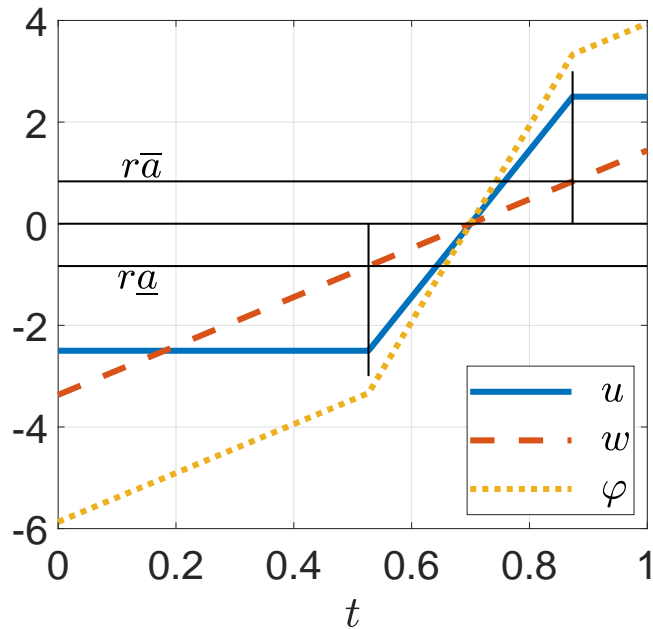}
		\caption{Double integrator---The solid (blue) plot is the optimal primal control solution $u$, the dashed (orange) plot is the dual solution $w$ and the dotted (yellow) plot is the fixed point $\varphi$ of the DR operator.}
		\label{fig:PDI_duality}
	\end{figure}

We note that the optimal primal solution $u$ is unique, no matter what the value of $r\in(0,\infty)$, or $\gamma=1/(1+r)\in(0,1)$, is, of course
(see Fact~\ref{fact:ZK:nonemp}).  On the other hand, the optimal dual solution $w$ depends on the parameter $r$: as $r$ changes the slope of the line representing the graph of $w$ changes with the $t$-intercept remaining the same.  The fixed point $\varphi$ also evolves to obey $\varphi =  u + w$. For a fixed $r$ we cannot prove that $w$ is unique but after running Algorithm~\ref{algo:DR_primal_dual} with $r=1/3$ using $100,000$ randomly generated starting points we always converged to the same $w$.

\subsection{Machine tool manipulator}
A machine tool manipulator is a machine used to simulate human hand movements usually found in manufacturing. In \cite{ChrMauZir2010,BurKayMaj2014} this is modelled as an optimal control problem of the form in \eqref{ODE} with
\begin{align*}
A &= \begin{bmatrix}
0 & 0 & 0 & 1 & 0 & 0 & 0 \\
0 & 0 & 0 & 0 & 1 & 0 & 0 \\
0 & 0 & 0 & 0 & 0 & 1 & 0 \\
-4.441\times10^7/450 & 0 & 0 & -8500/450 & 0 & 0 & -1/450 \\
0 & 0 & 0 & 0 & 0 & 0 & 1/750 \\
0 & 0 & -8.2\times10^6/40 & 0 & 0 & -1800/40 & 0.25/40 \\
0 & 0 & 0 & 0 & 0 & 0 & -1/0.0025
\end{bmatrix}, \\ b &= \begin{bmatrix}
0 & 0 & 0 & 0 & 0 & 0 & 1/0.0025
\end{bmatrix}^T,
\end{align*}
$x_0=(0,0,0,0,0,0,0)$, $x_f=(0,2.7\times10^{-3},0,0,0.1,0,0)$, $t_f=0.522$. We alter the objective function given in the previously mentioned references to be the minimum-energy control as in \cite{BurCalKay2022}. As with the double integrator problem we cannot solve this problem analytically hence the need for a numerical method such as the one in this paper. The projector onto $\cal{B}$ is as given in Fact \ref{cor:projB} with $t_f=0.0522$. Due to the large number of state variables involved in this problem it is not feasible to give the projector onto $\cal{A}$ in closed form so we implement a numerical approach to approximate the projector. The numerical algorithm and expression for the projector onto $\cal{A}$ are given in \cite{BurCalKay2022}.

\begin{figure}[t!]
		\centering
		\includegraphics[width=100mm]{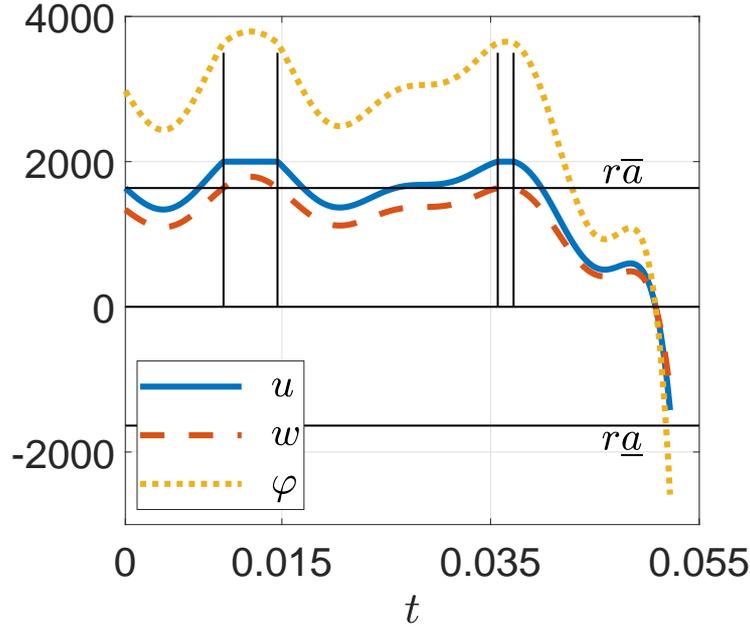}
		\caption{Machine tool manipulator---The solid (blue) plot is the optimal primal control solution $u$, the dashed (orange) plot is the dual solution $w$ and the dotted (yellow) plot is the fixed point $\varphi$ of the DR operator.}
		\label{fig:MTM_duality}
	\end{figure}

For this problem let $\underline{a} = -2000$, $\overline{a} = 2000$, $r=1/0.55-1$ (i.e., $\gamma=0.55$). The value of $r$ will not impact the solution but from experiments in \cite{BurCalKay2022}, $\gamma=0.55$ had the fastest performance. With $u_0=0$ and $\varepsilon=10^{-4}$ Algorithm~\ref{algo:DR_primal_dual} converged in 249 iterations. From Figure~\ref{fig:MTM_duality} we can again observe that, as stated in Theorem~\ref{thm:fixedpt}, $\varphi=u+w$. We can also see that Condition~\eqref{eq:opt_u_&_w} is verified by the plots in Figure~\ref{fig:MTM_duality} which once more confirms the optimality of $u$.

% \sout{As with the double integrator example the optimal control $u$ is unique but we cannot say whether $w$ is unique. For 1,000 randomly generated starting points with $r=1/0.55-1$, Algorithm~\ref{algo:DR_primal_dual} always converged to the same $w$ but we do not have a proof that $w$ is unique for a fixed $r$. As we change the value of $r$, $u$ remains unique while we obtain a different $w$ 
% %changes 
% due to it's dependence on $r$. The fixed point $\varphi$ is also different for each $w$ to satisfy the result $\varphi=u+w$.
% }}
% \textcolor{mypink}{I suggest we avoid this comment.}

\section{Conclusion}

We have explored relationships between the primal and dual optimal control problems as the DR algorithm is applied to solve them.  We derived the Fenchel dual to the primal problem.  We provided an explicit expression 
of the set of fixed points of the DR operator as the Minkowski sum of the sets  of primal and dual solutions.  
We showed that the fixed point expression can be used as a certificate of optimality in that the optimality conditions for optimal control obtained numerically can be checked.

As an example, we first chose the minimum-energy control of the double integrator, which is simple yet rich enough to illustrate the concepts we developed and the results we obtained. We also applied our methodology to the minimum-energy control of a machine tool manipulator model which is numerically more challenging to solve than the double integrator.

In the future, the work we did here should be extended to general LQ optimal control problems, also involving constraints on the state variables.  We considered only constraints on the control variables in the present paper.  The inclusion of state variable constraints is well known to create theoretical and numerical challenges.  The derivation of the dual of LQ control problems, albeit in a space different to that of the primal, done in~\cite{AltKaySch2016,BurKayMaj2014} involves only constraints on the control variables, so the work there needs to be extended first.

\section*{Acknowledgments}

The authors are indebted to the two anonymous reviewers whose comments and suggestions improved the paper.  The research of BIC is supported by an Australian Government Research Training Program Scholarship, as well as by the funding provided by the Universities of South Australia and Waterloo for her three-month visit to the University of Waterloo.
The research of WMM is supported by the Natural Sciences and Engineering Research Council of Canada Discovery Grant and the Ontario Early Researcher Award.


\begin{thebibliography}{30}
	\small	
		\bibitem{AltKaySch2016}
		{\sc W.~Alt, C.~Y.~Kaya, and C.~Schneider},
		{\em Dualization and discretization of linear-quadratic problems with bang--bang solutions}.
		EURO J. Comput. Optim., 4 (2016), 47--77.
		
\bibitem{AmmKen1998}
{\sc H. M. Amman, D. A. Kendrick},
{\em Computing the steady state of linear quadratic optimization models with rational expectations}.
Econ. Lett., 58(2), 185--191, 1998.

\bibitem{AthFal1966}
{\sc M. Athans and P. Falb},
{\em Optimal Control: An Introduction to the Theory and Its Applications}.
McGraw-Hill, Inc., New York, 1966.
\bibitem{AT}
{\sc H. Attouch, M. Thera}, 
{\em A general duality principle for the sum of two operators}. J. Convex
Anal. 3 (1996), 1--24.

		
		\bibitem{BauBurKay2019}
		{\sc H.~H.~Bauschke, R.~S.~Burachik, and C.~Y.~Kaya},
		{\em Constraint splitting and projection methods for optimal control of double integrator}.
		In: H.~H.~Bauschke, R.~S.~Burachik and D.~R.~Luke, 
		``Splitting Algorithms, Modern Operator Theory, and Applications,'' 
		Springer Nature, Switzerland, pp. 45--68, 2019.
		
		\bibitem{BauHarMou2014}
		{\sc H.~H.~Bauschke, W.~L.~Hare, and W.~M.~Moursi},
		{\em Generalized solutions for the sum of 
			two maximally monotone operators}.
		SIAM J. Control Optim., 52 (2014), 1034--1047.
		
		\bibitem{BauBotHarMou2012}
		{\sc H.~H.~Bauschke, R.~I.~Bo\c{t}, W.~L.~Hare, and W.~M.~Moursi},
		{\em Attouch--Thera duality revisited: paramonotonicity and operator splitting}.
		J. Approx. Theory, 164 (2012), 1065--1084.
		
		\bibitem{BC2017}
		{\sc H.~H.~Bauschke and P.~L.~Combettes},
		\emph{Convex Analysis and Monotone Operator Theory in Hilbert Spaces}. 2nd edition, Springer, 2017.

		
\bibitem{BulKro2008}
{\sc A. V., Bulatov and V. F. Krotov},
{\em On dual problems of optimal control}.
Automat. Rem. Contr., 69 (2008), pp. 1653--1662.

\bibitem{BurCalKay2022}
{\sc R. S. Burachik, B. I. Caldwell, and C. Y. Kaya},
{\em Projection methods for control-constrained
minimum-energy control problems}. arXiv:2210.17279v1, https://arxiv.org/abs/2210.17279.

		\bibitem{BurKayMaj2014}
		{\sc R.~S.~Burachik, C.~Y.~Kaya, and S.~N.~Majeed},
		{\em A duality approach for solving control-constrained linear-quadratic optimal control problems}.
		SIAM J. Control Optim., 52 (2014), 1771--1782.

		
\bibitem{BusMau2000}
{\sc C. B\"{u}skens, H. Maurer},
{\em SQP-methods for solving optimal control problems with control and state constraints: Adjoint variables, sensitivity analysis and real-time control}.
J. Comput. Appl. Math., 120(1), 85--108, 2000.

\bibitem{ChrMauZir2010}
{\sc B. Christiansen, H. Maurer, and O. Zirn},
{\em Optimal control of machine tool manipulators}.
Recent Advances in Optimization and its Applications in Engineering,
Springer-Verlag, Berlin, Heidelberg, 2010.
		
		\bibitem{Clarke2013}
		{\sc F.~Clarke},
		{\em Functional Analysis, Calculus of Variations and Optimal Control}.
		Springer-Verlag, London, 2013.
		
		\bibitem{AMPL}
		{\sc R.~Fourer, D.~M.~Gay, and B.~W.~Kernighan},
		{\em AMPL: A Modeling Language for Mathematical Programming, Second Edition}.
		Brooks/Cole Publishing Company / Cengage Learning, 2003.


  \bibitem{EckThesis}
	{\sc J.\ Eckstein},
	\emph{Splitting Methods for Monotone Operators with
		Applications to Parallel Optimization}.
	Ph.D.~thesis, MIT, 1989. 
		
\bibitem{HagIan1984}
{\sc W. W. Hager and G. D. Ianculescu},
{\em Dual approximations in optimal control}.
SIAM J. Control Optim., 22 (1984), 423--465.

  \bibitem{Hestenes1966}
		{\sc M.~R.~Hestenes},
		{\em Calculus of Variations and Optimal Control Theory}.
		John Wiley \& Sons, New York, 1966.


\bibitem{Kaya2020}
{\sc C.~Y.~Kaya},
{\em Optimal control of the double integrator with minimum total variation}.
J. Optim. Theory Appl., 185 (2020), 966--981.

		
\bibitem{Kirk1970}
{\sc D. E. Kirk},
{\em Optimal Control Theory: An Introduction}.
Prentice-Hall, Inc., New Jersey, 1970.

\bibitem{Klamka2019}
{\sc J. Klamka},
{\em Controllability and Minimum Energy Control}.
Springer, Cham, Switzerland, 2019.

\bibitem{KugPes1990}
{\sc B. Kugelmann, H. J. Pesch},
{\em New general guidance method in constrained optimal control, part 1: Numerical method}.
J. Optim. Theory Appl., 67(3), 421--435, 1990.

\bibitem{Locatelli2017}
{\sc A.~Locatelli},
{\em Optimal Control of a Double Integrator: A Primer on Maximum Principle}.
Springer, Switzerland, 2017.
		
\bibitem{MauObe2003}
{\sc H. Maurer, H. J. Oberle},
{\em Second order sufficient conditions for optimal control problems with free final time: The Riccati approach}.
SIAM J. Control Optim., 41 (2), 380--403, 2003.

\bibitem{Mou2011}
{\sc T. Mouktonglang},
{Innate immune response via perturbed LQ-control problem}.
Adv. Stud. Biol., 3, 327--332, 2011.

		\bibitem{Mordukhovich2006}
		{\sc B.~S.~Mordukhovich},
		{\em Variational Analysis and Generalized Differentiation II: Applications}.
		Springer-Verlag, Berlin, Heidelberg, 2006.
		
		
\bibitem{OdoStaBoy2013}
{\sc B. O'Donoghue, G. Stathopoulos, and S. Boyd},
{\em A splitting method for optimal control}.
IEEE Trans. Contr. Sys. Tech., 21, 2432--2442, 2013.

		
		\bibitem{PonBolGamMis1986}
		{\sc L.~S.~Pontryagin, V.~G.~Boltyanskii, R.~V.~Gamkrelidze, and E.~F.~Mishchenko},
		{\em The Mathematical Theory of Optimal Processes}. John Wiley \& Sons, New York, 1962.
		
\bibitem{Rockafellar1970a}
{\sc R. T. Rockafellar},
{\em Conjugate convex functions in optimal control and the calculus of
  variations}.
J. Convex Anal. Appl., 32 (1970), 174--222.

\bibitem{Rockafellar1971}
{\sc R. T. Rockafellar},
{\em Existence and duality theorems for convex problems of Bolza}.
Trans. Amer. Math. Soc., 159 (1971), 1--40.

\bibitem{Rockafellar1987}
{\sc R. T. Rockafellar},
{\em Linear quadratic programming and optimal control}.
SIAM J. Control Optim., 25 (1987), 781--814.

  \bibitem{Rugh1996}
		{\sc W.~J.~Rugh},
		{\em Linear System Theory, 2nd Edition}.
		Pearson, 1996.
		
\bibitem{Sethi2019}
{\sc S.~P.~Sethi},
{\em Optimal Control Theory: Applications to Management Science and Economics}, First Edition. Springer, Cham, Switzerland, 2019.

		
		\bibitem{Vinter2000}
		{\sc R.~B.~Vinter},
		{\em Optimal Control},
		Birkh\"{a}user, Boston, 2000.
		
		\bibitem{WacBie2006}
		{\sc A.~W\"achter and L.~T.~Biegler},
		{\em On the implementation of a primal-dual interior point filter line
			search algorithm for large-scale nonlinear programming}.
		Math. Progr., 106 (2006), 25--57.
		
		
		%\bibitem{AlwBauMouWan2018}
		%{\sc S. Alwadani, H. H. Bauschke, W. M. Moursi, X. Wang},
		%{\em On the asymptotic behaviour of the Aragon Artacho-Campoy algorithm}.
		%https://arxiv.org/abs/1805.11165, 2018.
		%
		%\bibitem{AAC}
		%{\sc F. J. Arag\'on Artacho and R. Campoy},
		%{\em A new projection method for finding the closest point in the
		%intersection of convex sets}.
		%Comput. Optim. Appl. 69 (2018), 99--132. 
		%
		%\bibitem{AraCam2018}
		%{\sc F. J. Arag\'on Artacho and R. Campoy},
		%{\em Computing the resolvent of the sum of maximally monotone operators with the averaged alternating modified reflections algorithm}.
		%https://arxiv.org/abs/1805.09720, 2018.
		%
		%\bibitem{AscMatRus1995}
		%{\sc U. M. Ascher, R. M. M. Mattheij and R. D. Russell},
		%{\em Numerical Solution of Boundary Value Problems for Ordinary Differential Equations}.
		%SIAM Publications, Philadelphia, 1995.
		%
		%%\bibitem{Bauschke}
		%%{\sc H. H. Bauschke},
		%%{\em The approximation of fixed points of compositions of
		%%nonexpansive mappings in Hilbert space}.
		%%J. Math. Anal. Appl. 202 (1996), 150--159. 
		%
		%\bibitem{BauCombettes}
		%{\sc H. H. Bauschke and P. L. Combettes},
		%{\em Convex Analysis and Monotone Operator Theory in Hilbert
		%Spaces}. Second edition.
		%Springer, 2017. 
		%
		%\bibitem{BauKoch}
		%{\sc H. H. Bauschke and V. R. Koch},
		%{\em Projection methods: Swiss Army knives for solving
		%feasibility and best approximation problems with halfspaces}.
		%Infinite Products of Operators and Their Applications, 2012,
		%1--40. 
		%
		%\bibitem{BauMou2016}
		%{\sc H. H. Bauschke and W. M. Moursi},
		%{\em On the order of the operators in the Douglas--Rachford algorithm}.
		%Optimization Letters, 10 (2016), 447--455. 
		%
		%\bibitem{BauMoursi}
		%{\sc H. H. Bauschke and W. M. Moursi},
		%{\em On the Douglas--Rachford algorithm}.
		%Math. Program., Ser. A, 164 (2017), 263--284.
		%
		%\bibitem{BorSim2011}
		%{\sc J. M. Borwein and B. Sims},
		%{\em The DouglasÐRachford Algorithm in the absence of convexity}. 
		%In: Bauschke H., Burachik R., Combettes P., Elser V., Luke D., Wolkowicz H. (eds) Fixed-Point Algorithms for Inverse Problems in Science and Engineering. Springer Optimization and Its Applications, vol 49. Springer, New York, NY.
		%
		%\bibitem{BoyleDykstra}
		%{\sc J. P. Boyle and R. L. Dykstra},
		%{\em A method for finding projections onto the intersection of
		%convex sets in Hilbert spaces}, in 
		%Advances in Order Restricted Statistical Inference,
		%vol.~37 of Lecture Notes in Statistics, Springer, 1986, 28--47. 
		%
		%%\bibitem{Censor}
		%%{\sc Y. Censor},
		%%{\em Computational acceleration of projection algorithms for the
		%%linear best approximation problem}.
		%%Lin. Alg. Appl., 416 (2006), 111--123. 
		%
		%\bibitem{Combettes2003}
		%{\sc P. L. Combettes},
		%{\em A block-iterative surrogate constraint splitting method for quadratic signal recovery}.
		%IEEE Trans. Sig. Proc., 51 (2003), 2432--2442.
		%
		%\bibitem{Combettes2009}
		%{\sc P. L. Combettes},
		%{\em Iterative construction of the resolvent of a sum of maximal
		%monotone operators}.
		%J.\ Convex Anal., 16 (2009), 727--748.
		%
		%\bibitem{DonHag2001}
		%{\sc A. L. Dontchev and W. W. Hager},
		%{\em The Euler approximation in state constrained optimal control},
		%Math. Comp., 70 (2001), 173--203.
		%
		%\bibitem{DonHagMal2000}
		%{\sc A. L. Dontchev, W. W. Hager and K. Malanowski},
		%{\em Error bound for Euler approximation of a state and control
		%  constrained optimal control problem,}
		%Numer. Funct. Anal. Optim., 21 (2000), 653--682.
		%
		%\bibitem{DougRach}
		%{\sc J. Douglas and H. H. Rachford},
		%{\em On the numerical solution of heat conduction problems in two
		%and three space variables}.
		%Trans. Amer. Math. Soc., 82 (1956), 421--439. 
		%
		%\bibitem{EckBer}
		%{\sc J. Eckstein and D. P. Bertsekas},
		%{\em On the Douglas-Rachford splitting method and the proximal
		%point algorithm for maximal monotone operators}.
		%Math. Program., Ser. A, 55 (1992), 293--318. 
		%
		%\bibitem{EckFer1998}
		%{\sc J. Eckstein and M. C. Ferris},
		%{\em Operator-splitting methods for monotone affine variational inequalities, with a parallel application to optimal control}.
		%INFORMS J. Comput., 10 (1998), 218--235.
		%
		%
		%%\bibitem{Halpern}
		%%{\sc B.\ Halpern},
		%%{\em Fixed points of nonexpanding maps}.
		%%Bull. Amer. Math. Soc, 73 (1967), 957--961. 
		%
		%%\bibitem{HarSetVic1995}
		%%{\sc R. F. Hartl, S. P. Sethi, and R. G. Vickson},
		%%{\em A survey of the maximum principles for optimal control problems with state constraints}.
		%%SIAM Rev., 37 (1995), 181--218.
		%
		%
		%\bibitem{KayLucSim2004}
		%{\sc C. Y. Kaya, S. K. Lucas, and S. T. Simakov},
		%{\em Computations for bang--bang constrained optimal control using a
		%mathematical programming formulation},
		%Optim. Contr. Appl. Meth., 25(6) (2004), 295--308.
		%
		%\bibitem{KayNoa1996}
		%{\sc C. Y. Kaya and J. L. Noakes},
		%{\em Computations and time-optimal controls},
		%Opt. Cont. Appl. Meth., 17 (1996), 171--185.
		%
		%\bibitem{KayNoa2003}
		%{\sc C. Y. Kaya and J. L. Noakes},
		%{\em Computational algorithm for time-optimal switching control},
		%J. Optim. Theory App., 117 (2003), 69--92.
		%
		%%\bibitem{Lions}
		%%{\sc P.-L.\ Lions},
		%%{\em Approximation de points fixes de contractions}.
		%%C. R. Acad. Sci. S\'er. A-B Paris, 284 (1977), 1357--1359. 
		%
		%\bibitem{LM}
		%{\sc P.-L. Lions and B. Mercier},
		%{\em Splitting algorithms for the sum of two nonlinear
		%operators}.
		%SIAM J. Numer. Anal., 16 (1979), 964--979. 
		%
		%\bibitem{OdoStaBoy2013}
		%{\sc B. O'Donoghue, G. Stathopoulos, and S. Boyd},
		%{\em A splitting method for optimal control}.
		%IEEE Trans. Contr. Sys. Tech., 21 (2013), 2432--2442.
		%
		%\bibitem{Rugh1995}
		%{\sc W. J. Rugh},
		%{\em Linear System Theory, 2nd Edition}.
		%Pearson, 1995.
		%
		%\bibitem{StoBul2002}
		%{\sc J. Stoer and R. Bulirsch},
		%{\em Introduction to Numerical Analysis, 2nd Edition}.
		%Springer-Verlag, New York, 1993.
		%
		%\bibitem{Svaiter}
		%{\sc B. F. Svaiter},
		%{\em On weak convergence of the Douglas-Rachford method}.
		%SIAM J. Control Optim., 49 (2011), 280--287. 
		%
		%
		%%\bibitem{Wittmann}
		%%{\sc R.\ Wittmann},
		%%{\em Approximation of fixed points of nonexpansive mappings}.
		%%Arch. Math., 58 (1992), 486--491. 
		
	\end{thebibliography}
\end{document}